\numberwithin{equation}{section}
\newcommand{\Omm}{\Omega_{L,N;m_1,m_2}}
\newcommand{\cR}{\mathcal R}
\newcommand{\cW}{\mathcal W}
\newcommand{\bW}{\mathbb W}
\newcommand{\cM}{\mathcal M}
\newtheorem{Theorem}{Theorem}
\newtheorem{Remark}{Remark}
\newtheorem{Corollary}{Corollary}
\newtheorem{Lemma}{Lemma}
\newtheorem{Proposition}{Proposition}
 \title{Stochastic heat equation limit of a $(2+1)$d growth model}
\author{Alexei Borodin}
\address{Massachusetts Institute of Technology,
Department of Mathematics,
77 Massachusetts Avenue, Cambridge, MA 02139-4307, USA, and Institute for Information Transmission Problems, Bolshoy Karetny per. 19, Moscow 127994, Russia}
\email{borodin@math.mit.edu}
\author{Ivan Corwin}
\address{Columbia University,
Department of Mathematics,
2990 Broadway,
New York, NY 10027, USA,
and Clay Mathematics Institute, 10 Memorial Blvd. Suite 902, Providence, RI 02903, USA}
\email{ivan.corwin@gmail.com}
\author{Fabio Lucio Toninelli}
\address{Universit\'e de Lyon, CNRS and Institut Camille Jordan, Universit\'e Lyon 1, 43 bd du 11 novembre 1918, 69622 Villeurbanne, France}
\email{toninelli@math.univ-lyon1.fr}
\begin{document}

\begin{abstract}
  We determine a $q\to 1$ limit of the two-dimensional 
  $q$-Whittaker driven particle system on the torus studied previously in
  \cite{CT}. This has an interpretation as a $(2+1)$-dimensional stochastic interface growth model, that is believed to belong to the so-called anisotropic  Kardar-Parisi-Zhang (KPZ) class. This limit falls into a general class of
  two-dimensional systems of driven linear SDEs which have stationary
  measures on gradients. Taking the number of particles to infinity we
  demonstrate Gaussian free field type fluctuations for the stationary
  measure. Considering the temporal evolution of the stationary
  measure, we determine that along characteristics, correlations are
  asymptotically given by those of the $(2+1)$-dimensional additive
  stochastic heat equation. This confirms (for this model) the
  prediction that the non-linearity for the anisotropic KPZ
  equation in $(2+1)$-dimension is irrelevant.
\\
\\
2010 \textit{Mathematics Subject Classification: 	82C20, 60J10,
  60K35, 82C24}
  \\
  \textit{Keywords: Interacting particle system, Interface growth,
    Anisotropic KPZ equation, Stochastic Heat Equation}
 \end{abstract}
\maketitle

\section{Introduction}

The two-dimensional $q$-Whittaker particle system on the torus was
introduced by Corwin and Toninelli \cite{CT}. The state of this system
is interlacing collections of particles on the two-dimensional
torus. Particles jump right by one on their row according to
exponential clocks whose rates are determined by certain nearest
neighbor inter-particle distances -- see (\ref{eq:6prime}) -- as well
as a parameter $q\in[0,1)$.  As discussed in \cite{CT}, the dynamics
can be  seen also as a stochastic growth process for a random discrete
$(2+1)$-dimensional interface. The mapping is based on the fact that
interlacing particle configurations correspond to perfect matchings
(dimer coverings) of the hexagonal lattice, and the associated
integer-valued height function provides the discrete interface. In the
present work, we adopt the point of view of the particle system rather
than that of the growth model.

These dynamics originated in Borodin and Corwin's study of Macdonald
processes \cite{BorCor} wherein they were defined on certain
triangular arrays of interlacing particles. Based on that inspiration
as well as a recent treatment by Toninelli in \cite{Ton} of the $q=0$
case of this system, \cite{CT} proposed and then verified that certain
local Gibbs measures are stationary for the dynamics -- see
(\ref{eq:3prime}) and Proposition \ref{Prop1} for a summary of those
results.

When $q=0$, 
\cite{Ton} determined that stationary measures are simply 
uniform measures on particle configurations, which are known to enjoy a relation to determinantal point
processes and to have Gaussian Free Field type fluctuations in
the infinite volume limit for the torus. In \cite{Ton} it was also shown
that the dynamics remain well-defined in the infinite volume limit and
bounds on the scale of fluctuations of the
associated height function for the system were determined. In
particular, it was shown that height
function fluctuations grow at a smaller rate than any polynomial in
time $t$ (and that the bound can be improved to $O(\sqrt {\log t})$ in a certain range of particle densities).

The $q=0$ model is predicted (cf. \cite{BorFer}) to be in the anisotropic $(2+1)$-dimensional Kardar-Parisi-Zhang universality class which is represented by the stochastic PDE
\begin{equation}\label{AKPZ}
\frac{\partial h}{\partial t}(t;x,y) = \frac{1}{2}\Delta h(t;x,y) + (\nabla h, Q \nabla h)(t;x,y) + \xi(t;x,y)
\end{equation}
where $h(t;x,y)$ represents a height function at time $t$ and position
$(x,y)$, $\Delta$ is the Laplacian in $x$ and $y$, $Q$ is a $2\times
2$ matrix with signature $(+,-)$ and $\xi(t;x,y)$ is a space-time
white noise. This is called anisotropic because of the mixed signature
on the non-linearity, whereas when the signature of $Q$ is $(+,+)$ or
$(-,-)$ the model is called ``isotropic''. In 1991, Wolf \cite{Wolf} predicted that the fluctuations of the anisotropic equation should grow like $\sqrt{\log t}$ and moreover that the non-linearity should be irrelevant and the long-time behavior of the system should be exactly as that of the two-dimensional additive stochastic heat equation (i.e., the equation with $Q$ set to zero).

Wolf's prediction of $\sqrt{\log t}$ fluctuations was demonstrated
numerically soon after by Halpin-Healy and Assdah
\cite{HalpinHealyAssdah}. Pr\"{a}hofer and Spohn \cite{PS} considered
a microscopic model related to the Gates-Westcott model, and
demonstrated through exact calculation this fluctuation
scaling. Borodin and Ferrari \cite{BorFer} studied a (non-periodic)
triangular array variant of the $q=0$ case of the $q$-Whittaker
particle system and, using the technology of Schur processes they
proved the $\sqrt{\log t}$ scaling and further demonstrated
convergence to a Gaussian free field as time goes to infinity. The
occurrence of a Gaussian free field is consistent with the prediction
of convergence to the additive stochastic heat equation since the
Gaussian free field is stationary for the time evolution of that
stochastic PDE. It should be noted that since \cite{BorFer} dealt with
dynamics on a triangular array of particles, the Gaussian Free Field
fluctuations only appear after a suitable coordinate change. This
coordinate change is not visible from the renormalization group
arguments of Wolff. In this paper we focus on translation invariant
models for which no coordinate change is necessary.

We should note that (\ref{AKPZ}) is not, a priori, well-defined
because of the non-linearity and the fact that solutions are not
function valued, but rather generalized functions (like the Gaussian
free field). We are not aware of any rigorous treatment of this
equation, though it may eventually fall into the class of systems
which can be defined through Hairer's regularity structures
\cite{HairerRegularity}. As such, none of Wolf's predictions have been
proved for (\ref{AKPZ}) itself. Let us also note that the story is
quite different when the model is isotropic -- see recent numerical
studied of Halpin-Healy \cite{HHisotropic}.

To our knowledge, the present paper, together with the
  forthcoming work of Borodin, Corwin and Ferrari \cite{BCF}  in the
  context of these dynamics on triangular arrays, is the first
instance in which the full space-time picture has been rigorously
established for a model in the $(2+1)$-dimensional anisotropic KPZ
class converging to the additive stochastic heat equation. To be
upfront about things, we do not prove this convergence as a process  (which would require some additional tightness estimates),
but rather in terms of the correlation structure for Gaussian
processes.

The initial motivation for this paper was the desire to extend the
study of \cite{BorFer,Ton} to the $q\neq 0$ case. In that case the
Schur / determinantal point process structure is lost. To overcome
this impediment, we decided to consider a Gaussian limit of the model,
hoping that calculations there would become sufficiently doable
without said structure. In particular, we consider the
$q=e^{-\epsilon}\to 1$ limit of the particle system, as we
simultaneously scale the torus width and height like
$\epsilon^{-1}$. We start  particles spaced on the $\epsilon^{-1}$
scale according to a certain crystalline configuration (see the
beginning of Section \ref{sec:conv}) with smaller $\epsilon^{-1/2}$
scale fluctuations.  Speeding time up by $\epsilon^{-1}$, we prove
(Theorem \ref{th:SDE}) that particle positions (multiplied by $\epsilon$)
have asymptotically a constant
 speed $v$ and
that fluctuations (multiplied by $\epsilon^{1/2}$)
 converge (as a space-time process) to a
limiting system of SDEs. Likewise, under this scaling the stationary
measure on the $q$-Whittaker system converges to a Gaussian measure
(Lemma \ref{th:locmax}). (Note: we do not prove that the stationary
measure concentrates on the crystalline configuration, though it is
certainly compelling to conjecture this).

Once in the setting of SDEs with Gaussian stationary measures (in
fact, the stationary measures are on gradients) we are able to use
Fourier transforms to explicitly compute the space-time correlations
as the number of particles goes to infinity (Theorem \ref{th:2}) as
well as the correlations and Gaussian free field limit of the
stationary measure (Theorem \ref{th:muinv}). Theorem \ref{th:2} has a
number of corollaries. Corollary \ref{cor1} shows that for fixed,
large time $t$, fluctuations grow like $\sqrt{\log t}$ and
correlations decay in a spatial range of order $t^{1/2}$. Corollary
\ref{cor2} considers the correlations along space-time lines. There
exists a distinguished direction $U$ along which correlations exist in
a temporal scale of order $t$ and a spatial scale of order
$t^{1/2}$. In fact, Corollary \ref{cor3} shows that in this scale, the
correlations converge to those of the $(2+1)$-dimensional additive
stochastic heat equation -- thus validating Wolf's prediction for this
model. On the other hand, Corollary \ref{cor2} also shows that for
space-time direction not equal to $U$, the correlations decay to zero on a $t^{1/2}$ time-scale, thus much faster than along $U$.

 We call the direction $U$ the
``characteristic'' direction. The reason is that the lines $y=U t$ are the characteristic curves of a PDE that we conjecture to describe the hydrodynamic limit of our model when the initial condition is not crystalline, see Section \ref{sec:pde}. There is a close
analogy with what happens in the context of $(1+1)$-dimensional
particle systems.  In that setting, the hydrodynamics are described by
Hamilton-Jacobi conservation laws which can be solved by computing the
characteristics and propagating initial data along
them. Characteristics are computed as the derivative of the flux with
respect to the local slope. At a more microscopic level, initial data
fluctuations are propagated along characteristics. In particular, one
has ``slow decorrelation'' \cite{Ferrari,CFPslowdec} along characteristics
whereby fluctuations along these space-time directions decorrelate far
slower than along other space-time directions. The phenomenon of slow decorrelation along characteristics was conjectured (with some supporting evidence) in \cite{BorFer} to hold
for the two-dimensional $q$-Whittaker particle system at $q=0$.

The aforementioned results concerning correlations of two-dimensional
systems of SDEs are actually proved below in much broader generality. In
particular, all results are proved provided that the SDEs take the
form of \eqref{eq:2} with the matrix $A$ satisfying the conditions of
Proposition \ref{prop:prop}. This  could reflect the expected   universality of
the $(2+1)$-dimensional anisotropic KPZ class.
It would be compelling
to see if any of these universality results can be proved directly for
the general $q$ system, without first taking the SDE limit. Without
the Gaussian structure, though, it is unclear how to proceed in this
goal.

As mentioned above, in the $q=0$ case, there exists a triangular array
variant of the $q$-Whittaker particle system which was studied at
length in \cite{BorFer} using Schur processes. The triangular variant
of the general $q$ case relates to $q$-Whittaker processes
\cite{BorCor} and though the system is no longer determinantal, there
are many useful formulas provided through the technology of Macdonald
processes. In a parallel paper to this, Borodin, Corwin and Ferrari
\cite{BCF} develop the analogous $q\to 1$ limit of this triangular
variant of the particle system and explore the limits of the exact
formulas and their applications (in particular, concerning the
asymptotic behavior of correlations along certain space-time
directions).

\subsection{Acknowledgements}

The authors wish to thank Patrik Ferrari for conversations
on   this work and the related work \cite{BCF}.  A. B. was
partially supported by the NSF grant DMS-1056390. I. C. was partially
supported by the NSF DMS-1208998, by a Clay Research Fellowship, by
the Poincar\'{e} Chair, and by a Packard Fellowship for Science and
Engineering. F. T. was partially funded by Marie Curie IEF Action
“DMCP- Dimers, Markov chains and Critical Phenomena”, grant agreement
n. 621894. This work was initiated during the Statistical Mechanics,
Integrability and Combinatorics program at Galileo Galilei Institute (Arcetri). We appreciate the
hospitality and support of these institutes.

\section{Model and notation}

We start by recalling the definition of the $q$-Whittaker particle
system on the torus, and its stationary measure as defined in
\cite{CT}. We consider an interacting particle system in which
particles live on the $L\times N$ discrete torus $\mathbb
T_{L,N}=\mathbb Z/(L \mathbb Z)\times \mathbb Z/(N \mathbb Z)$. The
horizontal size is $L$ and the vertical size is $N$.

The particle configuration space will be denoted $\Omm$, and it depends
on two integers $1< m_1< L$ and $1\le m_2< N$ such that
\begin{gather}
  \label{eq:26}
m_1/L+m_2/N<1.
\end{gather}
At each site $x=(x_1,x_2)\in \mathbb T_{L,N}$ there is at most one particle. On each row there are exactly $m_1$ particles. We exclude $m_1=1$ and $m_1=L$ to avoid trivialities. The parameter $m_2$ has a more topological nature and its meaning will be explained below.

The horizontal position of particle $p$ is denoted  $x_p\in \mathbb Z/(L\mathbb Z)$. Particle positions are interlaced, in the following sense.  Given  particle $p$ (say on row $i$), we let $p_1,p_4$ denote its right/left neighbor on the same row (note that if $m_1=2$ then $p_1=p_4$). Then, we require that in row $i-1$ there is exactly one particle, labeled $p_2$, whose position satisfies
\begin{eqnarray}\label{eq:1prime}
x_{p_2}\in \{x_{p}+1,x_p+2,\dots,x_{p_1}\}
\end{eqnarray}
and exactly one particle, labeled $p_3$, satisfying
\begin{gather}
  \label{eq:22}
  x_{p_3}\in \{x_{p_4}+1,x_{p_4}+2,\dots,x_{p}\}.
\end{gather}
See Figure \ref{fig:particelle}. Note that, automatically, in row $i+1$ there are exactly one particle $p_5$ and one
particle $p_6$ satisfying respectively
\begin{gather}
  \label{eq:23}
  x_{p_5}\in \{x_{p_4},\dots,x_p-1\},\quad
x_{p_6}\in \{x_{p},\dots,x_{p_1}-1\}.
\end{gather}

\begin{figure}[ht]
\includegraphics[width=.6\textwidth]{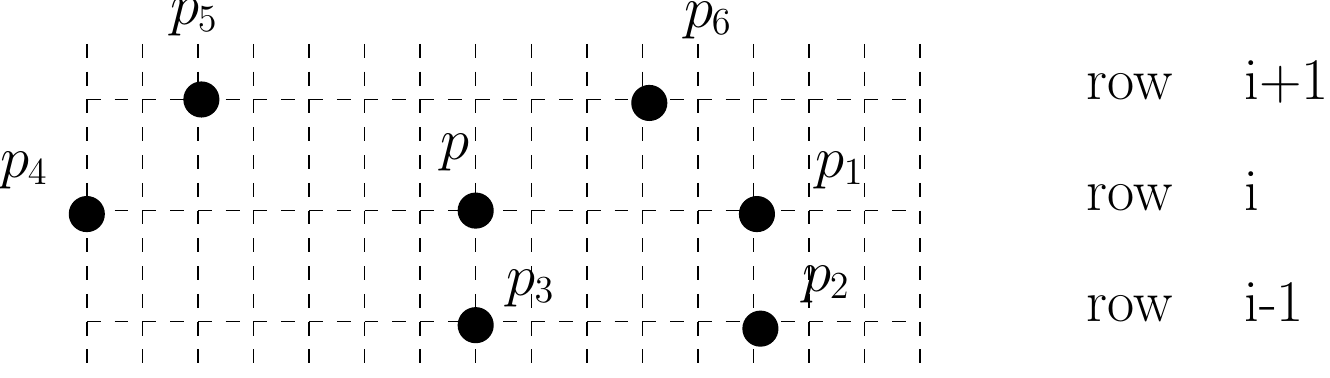}
\caption{
The neighbors $p_1,\dots,p_6$ of particle labeled $p$. Note that conditions
\eqref{eq:1prime}, \eqref{eq:22} allow $C_p:=x_{p}-x_{p_3}=0$ but they impose
$B_p+1:=x_{p_2}- x_p\ge1$.
}
\label{fig:particelle}
\end{figure}
We define non-negative integers $A_p,\dots,F_p$ as
\begin{eqnarray}
  \label{eq:19}
  &A_p=x_{p_1}-x_{p}-1;\quad
B_p=x_{p_2}-x_p-1;\quad
C_p=x_p-x_{p_3}\\
\nonumber
&D_p=x_{p}-x_{p_4}-1;\quad
E_p=x_p-x_{p_5}-1;\quad
F_p=x_{p_6}-x_p.
\end{eqnarray}
The particles $p_1,\dots,p_6$ are the six neighbors of $p$, labeled clockwise starting from the one on the right. The definition of the dynamics will be such that the labels of the neighbors of a particle $p$ do not change with time
(particles will not jump over each other or change interlacements).

Let $\Omega_{L,N;m_1}$ be the set of particle occupation functions,
i.e. of functions $\eta:\mathbb T_{L,N}\mapsto \{0,1\}$, with $m_1$
particles (i.e. occupation variables equal to 1) per row, whose
positions satisfy the constraints \eqref{eq:1prime}-\eqref{eq:23}. The set
$\Omega_{L,N;m_1}$ decomposes into disjoint ``sectors'':
\begin{gather}
  \label{eq:24prime}
  \Omega_{L,N;m_1}=\bigcup_{m_2}\Omm
\end{gather}
as follows. Given any particle $p$, connect $p$ to its up-right
neighbor $p_6$, then $p_6$ with its own up-right neighbor and repeat
the operation until the path $\Gamma$ thus obtained gets back to the
starting particle $p$. Note that $\Gamma$ forms a simple loop:
otherwise, there would be a particle $r$ which is reached along
$\Gamma$ from two different particles $r',r''$. This is impossible,
since both $r'$ and $r''$ would be the lower-left neighbor $r_3$ of
$r$. Call $N_v\in\mathbb N\cup\{0\},N_h\in\mathbb N\cup\{0\}$ the
vertical and horizontal winding numbers of $\Gamma$ around the torus
$\mathbb T_{L,N}$. It is easy to see that $N_h,N_v$ are independent of
the chosen initial particle $p$.  As discussed in \cite[Remark 2]{CT},
\begin{gather}
  \label{eq:27}
  m_2:=m_1\frac{N_h}{N_v}
\end{gather}
is an integer and it satisfies \eqref{eq:26}. The set
$\Omega_{L,N;m_1,k}$ is defined as the subset of $\Omega_{L,N;m_1}$ with $
m_2=k$. Each sector $\Omm$ will remain invariant under our
dynamics.

Let us briefly remark that the particle configurations we
are considering can also be mapped onto dimers on the periodized
$L\times N$ 
hexagonal lattice. This perspective is explained in \cite{CT} (see in particular Fig. 2 there) wherein
$n_1:=m_1 N$ corresponds to the number of vertical dimers, $n_2:=m_2 L$ to the
number of north-west dimers and $n_3:=NL-n_1-n_2$ the number of
north-east dimers. We will not pursue this perspective any further
here.

Given a configuration $\eta\in \Omm$, draw a directed upward edge from
any particle label $r$ to its up-right neighbor $r_6$ if $F_r=0$ (in
which case $r$ and $r_6$ have the same horizontal
position). For the particle labeled by $p$ let $V^+_p$ be the set that
includes $p$ plus the particle labels that can be reached from $p$ by
following upward oriented edges. The dynamics we consider is a
continuous-time Markov chain on $\Omm$.
For each $p$, there is an exponential clock of rate 
\begin{eqnarray}
  \label{eq:6prime}
\frac{(1-q^{B_p})(1-q^{D_p+1})}{(1-q^{C_p+1})}
\end{eqnarray}
 with $q\in[0,1)$. When said clock rings, all particles with label $r\in V^{+}_p$ shift by $(1,0)$. 
 Note that the rate is zero if $B_p=0$. This
prevents particles from overlapping after the move. Note also that
after the move, the configuration is still in $\Omm$. This is
discussed in more detail in \cite{CT}. Another way to
understand the dynamics is that when particle $p$ moves, provided its
up-right neighbor $p_6\in V^+_p$, then $C_{p_6}$ becomes $-1$ and the
jump rate for $p_6$ becomes infinite, and hence it immediately moves
(and so on for all other $r\in V^+_p$). These dynamics are called the
$q$-Whittaker particle system on the torus.  As a side remark, let us add that, in terms of dimer model, shift to the right by $+1$ of a family $V^+_p$ corresponds to increasing the height by $1$ in $|V^+_p|$ faces of the hexagonal lattice.

Given $q\in [0,1)$, let $\pi$ be the probability measure on
$\Omm$ defined as
\begin{eqnarray}
  \label{eq:3prime}
  \pi(\sigma):=\frac1{Z_{L,N;m_1,m_2}}\prod_{p}
 \frac{(q;q)_{A_p}}{(q;q)_{B_p}(q;q)_{C_p}}{\bf 1}_{\{\sigma\in \Omm\}}
\end{eqnarray}
where $(q;q)_n=(1-q)(1-q^2)\dots (1-q^n)$ 
and $Z_{L,N;m_1,m_2}$ is the normalizing constant necessary to make this a probability measure.
The main result of \cite{CT} (Theorem 1 therein) is:
\begin{Proposition}
\label{Prop1}
The probability law $\pi$ is stationary in time for $q$-Whittaker particle system on the torus.
\end{Proposition}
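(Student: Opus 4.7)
The plan is to verify the stationarity condition $\pi L = 0$ directly, where $L$ is the generator of the continuous-time Markov chain defined by the jump rates \eqref{eq:6prime}. Because every transition shifts a cluster $V^+_p$ strictly to the right, the dynamics is not reversible; instead, for each $\tau \in \Omm$ one must check the global balance equation
\[
\pi(\tau) \sum_p r_p(\tau) \;=\; \sum_p \pi(\tau^{-p})\, r_p(\tau^{-p}),
\]
where the right-hand sum is restricted to those $p$ for which a valid preimage $\tau^{-p}$, obtained by shifting the cluster $V^+_p(\tau^{-p})$ leftward by one, exists.

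The central computational step is to evaluate $\pi(\sigma^p)/\pi(\sigma)$ explicitly for a cluster move $\sigma \to \sigma^p$. Inspecting the definitions in \eqref{eq:19}, the only particles whose $A, B, C$ parameters change are those in $V^+_p$ itself (where $A, B$ each decrease by $1$ and $C$ increases by $1$) together with three families of boundary neighbors: for every $r \in V^+_p$, its left neighbor $r_4$ (whose $A_{r_4}$ increases by $1$) and its up-left neighbor $r_5$ (whose $B_{r_5}$ increases by $1$), plus the up-right neighbor of the topmost cluster particle $p_{\mathrm{top}}$ (whose $C$ value decreases by $1$). Using the identifications $A_{r_4} = D_r$, $B_{r_5} = E_r$, $C_{(p_{\mathrm{top}})_6} = F_{p_{\mathrm{top}}}$, substitution into \eqref{eq:3prime} yields
\[
\frac{\pi(\sigma^p)}{\pi(\sigma)} \;=\; (1-q^{F_{p_{\mathrm{top}}}}) \prod_{r \in V^+_p} \frac{(1-q^{B_r})(1-q^{D_r+1})}{(1-q^{A_r})(1-q^{C_r+1})(1-q^{E_r+1})}.
\]
The cluster rate $r_p(\sigma) = (1-q^{B_p})(1-q^{D_p+1})/(1-q^{C_p+1})$ exactly cancels the $r = p$ factor in this product, leaving a compact expression for $\pi(\sigma)\, r_p(\sigma)/\pi(\sigma^p)$ depending only on boundary data of $V^+_p$.

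With that in hand, the global balance equation reduces to an identity between two configuration-dependent sums of rational functions in the $q$-powers of $A, B, C, D, E, F$. I would verify this identity by exploiting the basic redundancies $A_p = D_{p_1}$, $B_p = E_{p_2}$, $C_p = F_{p_3}$, each coming from reading the same interparticle distance from either of its two endpoints; these should allow a term-by-term pairing between outgoing and incoming contributions after re-indexing. The main obstacle I anticipate is the careful bookkeeping of the cluster structure under reverse shifts: the set $V^+_p(\tau^{-p})$ may have a different cardinality from $V^+_p(\tau)$ precisely when $F_{p_{\mathrm{top}}}$ or $F_{p_3}$ transitions across zero, and the argument must verify that these boundary events do not produce an uncancelled defect in the global sum. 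I expect the final check to reduce to a finite $q$-Pochhammer identity that can be proved directly.
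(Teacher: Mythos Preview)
The paper does not actually prove this proposition here: it is quoted as Theorem~1 of \cite{CT}, so there is no in-paper argument to compare against. Your overall strategy---verify the global balance equation by computing $\pi(\sigma^p)/\pi(\sigma)$ explicitly and then matching outgoing against incoming flux---is the natural one and is indeed how the result is established in \cite{CT}.

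That said, your displayed ratio contains a bookkeeping error that would prevent the balance from closing. You assert that for every $r\in V^+_p$ the parameter $C_r$ increases by one under the shift; in fact this happens \emph{only} for the bottom particle $r=p$. For any other $r\in V^+_p$ one has $r_3\in V^+_p$ as well (this is precisely the cluster condition $F_{r_3}=C_r=0$), so both $r$ and $r_3$ move together and $C_r=0$ is unchanged. Consequently the correct ratio is
\[
\frac{\pi(\sigma^p)}{\pi(\sigma)} \;=\; \frac{1-q^{F_{p_{\mathrm{top}}}}}{1-q^{C_p+1}}\prod_{r\in V^+_p}\frac{(1-q^{B_r})(1-q^{D_r+1})}{(1-q^{A_r})(1-q^{E_r+1})},
\]
which differs from yours by a spurious factor $(1-q)^{|V^+_p|-1}$ whenever the cluster has more than one particle. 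With this correction the telescoping you anticipate along the cluster (combining with the single-particle rate $r_p(\sigma)$) actually works, and the remainder of your plan---re-indexing via $A_p=D_{p_1}$, $B_p=E_{p_2}$, $C_p=F_{p_3}$ and tracking how clusters deform under reverse shifts---is the right path to the global identity.
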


Clearly, the measure $\pi$ is not reversible, since the process is totally asymmetric.

\bigskip

We will consider a certain $q\to 1$ limit of the particle system. To fix scalings, for $\epsilon>0$ let $q=\exp(-\epsilon)$. For simplicity (to avoid a plethora of $\lfloor \cdot \rfloor$), we will assume $\epsilon^{-1}$ is an integer, though all results hold for arbitrary $\epsilon$.
 Further, let $L=\epsilon^{-1}\ell$ for some integer $\ell$. On each row there are $m:=m_1=n_1/N$
particles, with $m$ an integer of order $\ell$.  Also for simplicity we take $N=m$, so altogether
we have $n_1=m^2=O(\ell^2)$ particles. Taking $N$ to be another
multiple of $m$ would not change our results qualitatively.

  \begin{Remark}
\label{rem:bcc1}
On $\mathbb Z^2$, introduce the equivalence relation $\sim$ such that
$p\sim p'$ iff $p=p'+(j_1m-j_2 m_2,j_2 m)$ for some $j_1,j_2\in\mathbb Z$.
Observe that if we take $N=m$ steps along $\Gamma$ starting from $p$, we get a particle $p'$ that is the $j^{th}$ right neighbor of $p$ on the same row, for some $0\le j\le m$. Actually, one has $j=m_2$, since $N_vj=N_h m$.
Particles will be given a label $p\in \mathbb Z^2$ such that 
 $p_1=p+(1,0), p_2=p+(1,-1), p_3=p+(0,-1),p_4=p-(1,0),p_5=p+(-1,1),p_6=p+(0,1)$,
 with an arbitrary choice of which particle is labeled $p=(0,0)$. Thanks to the above observation, given integers $j_1,j_2$, particle $p+(j_1 m_1,j_2 N)$ is to be identified with $p+(j_2 m_2,0)$. 
In other words, particles are labeled by
$p\in \cR_m:=\mathbb Z^2/\sim$ (the quotient set of $\mathbb Z^2$ by
$\sim$, which contains $m^2$ equivalence classes).    
  \end{Remark}

We will first take the limit $\epsilon\to0$ with $m$ and $m_2$ fixed, and then
 $m\to\infty$ with the ratio
 $m_2/m$  bounded away from 0 and 1 if we want to take an
infinite-volume limit ($m_2/m\le1 $  by definition, recall
\eqref{eq:26} and $m=N$).

The average inter-particle distance along
a row is
\begin{eqnarray}
  \label{eq:D}
\frac{D}{\epsilon}:=\frac Lm=\frac{\ell}{\epsilon m}
\end{eqnarray}
so that, with reference to \eqref{eq:19}, the average value of  $D_p$ is $\epsilon^{-1}D$.

The average value of $C_p$ is  instead  fixed by choosing $m_2$ suitably, as follows. Let
 $C\in (0,D)$ be defined by
\begin{eqnarray}
  \label{eq:C}
\frac CD  =\frac{m_2}m
\end{eqnarray}
and let $B=D-C$.
(Note that, as long as the integers $m,m_2$ are  kept finite, $C$ can take only a finite set of values in $(0,D)$; such set becomes dense in $(0,D)$ if we allow $m,m_2$ to diverge with a fixed ratio).

We claim that  the average value of $C_p$ in any particle configuration  is $\epsilon^{-1}C$. Indeed, take the
path $\Gamma$ defined after \eqref{eq:24prime}. It visits exactly $N_v\times N=m N_v$ particles and its total displacement to the right equals $N_h \times L$. On the other hand, $N_h \times L$ is also the sum of the $C_p$ over all particles in $\Gamma$. 
From this, we see that the average value of $C_p$ is
\begin{eqnarray}
  \label{eq:bo}
\frac{N_h L}{N_v m}=\frac{m_2 L}{m^2}= \epsilon^{-1}\frac {m_2}{m }\frac{\epsilon L}{m}=\epsilon^{-1}\frac {m_2}{m}D
\end{eqnarray}
where in the first equality we used \eqref{eq:27}, and in the third \eqref{eq:D}. Eq. \eqref{eq:C} then allows to conclude.

Similarly, we see that the average of $B_p$ is  $\epsilon^{-1}B$. On the other hand, the averages of $D_p,E_p,F_p$ equal by definition the averages of $A_p, B_p,C_p$ respectively.

\section{Convergence to a system of SDEs}\label{sec:conv}

We will start the dynamics from an initial condition where each particle $p\in \cR_m$ is within distance $O(\epsilon^{-1/2})$
from its ``ideal position'' $X_p$ in a perfect ``crystalline
configuration'' where $D_p=\epsilon^{-1}D,
B_p=\epsilon^{-1}B,C_p=\epsilon^{-1}C$ for every $p$.
Assuming without loss of generality that $X_0=0$, we have for $p=(p^{(1)},p^{(2)})$
\begin{eqnarray}
  \label{eq:Xp}
  X_p=p^{(1)} \epsilon^{-1}D+p^{(2)}\epsilon^{-1}C
\end{eqnarray}
where the r.h.s. has to be taken modulo $\epsilon^{-1}\ell$.

Our first result (proven in Section \ref{sec:t1}) says that particles move macroscopically with a deterministic speed $v(C,D)>0$ and that, in the time-scale of order $\epsilon^{-1}$, fluctuations around such hydrodynamic limit are of order $\epsilon^{-1/2}$ and
converge   to a system of linear SDEs.

\begin{Theorem}
\label{th:SDE} Fix $N=m$ and $\ell$, so that the lattice $\Lambda_{L,N}$ depends only on $\epsilon$.
 Let $\{\bar\xi_p\}_{p\in \cR_m}\in \mathbb R^{\cR_m}$. Consider an
 initial configuration  $\sigma_0$ such that, defining 
 \begin{eqnarray}
   \label{eq:etaas}
   \eta_p:=\eta_{p,\epsilon}=\sqrt{\epsilon}(x_p-X_p),
 \end{eqnarray}
one has
 \begin{eqnarray}
   \label{eq:xi}
   \lim_{\epsilon\to0}\eta_p=\bar\xi_p \quad \forall p\in  \cR_m.
 \end{eqnarray}
Define
\begin{gather}
  \label{eq:1}
  v(C,D)=\frac{(1-\exp(-B))(1-\exp(-D))}{1-\exp(-C)}
\end{gather}
(recall that $B=D-C$) and
\begin{eqnarray}
  \label{eq:xit}
  \eta_{p,t}=\sqrt\epsilon\left({x_p(t/\epsilon)-X_p-\epsilon^{-1}vt}\right),
  \quad t\ge0.
\end{eqnarray}
Then, the random process $\{\eta_{p,\cdot}\}_{p\in\cR_m}$
converges weakly as $\epsilon\to0$ to the solution of the system of
linear stochastic differential equations
\begin{gather}
  \label{eq:2}
\left\{
  \begin{array}{ll}
 d\xi_{p,t}=\sqrt v\, {\rm  d}W_{p,t}+\sum_{p'}A_{p,p'}\xi_{p',t} dt\\
\xi_{p,0}=\bar\xi_p
  \end{array}
 \right.
\end{gather}
with
\begin{gather}
\label{eq:A}
A_{p,p'}=\delta_{p'=p}\left(\frac{e^{-D}(1-e^{-B})}{1-e^{-C}}-\frac{e^{-B}(1-e^{-D})}{1-e^{-C}}-\frac{e^{-C}(1-e^{-B})(1-e^{-D})}{(1-e^{-C})^2}\right)\\
+\delta_{p'=p+(1,-1)}\frac{e^{-B}(1-e^{-D})}{1-e^{-C}}
-\delta_{p'=p-(1,0)}\frac{e^{-D}(1-e^{-B})}{1-e^{-C}}\\+
\delta_{p'=p-(0,1)}\frac{e^{-C}(1-e^{-B})(1-e^{-D})}{(1-e^{-C})^2}
\end{gather}
and where $dW_{p,t}$ are independent white noises indexed by $p$, and one-dimensional in time $t$.
\end{Theorem}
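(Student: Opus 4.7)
The strategy is a standard generator-convergence / martingale-problem argument, combined with a stopping-time truncation that makes the linearization of the rate \eqref{eq:6prime} valid throughout the evolution. For $M>0$ introduce $\tau_M:=\inf\{t\ge 0:\max_{p\in\cR_m}|\eta_{p,t}|>M\}$. On $\{t<\tau_M\}$ we have $B_p,C_p,D_p,F_p=\epsilon^{-1}(B,C,D,C)+O(\epsilon^{-1/2}M)$, so for $\epsilon$ small (depending on $M$) the integer $F_r$ is strictly positive for every label $r$. This forces $V_p^+=\{p\}$ for every $p$, so that each firing shifts only $x_p$ by $+1$, i.e.\ $\eta_p\mapsto\eta_p+\sqrt\epsilon$. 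Combined with the deterministic drift $-\epsilon^{-1}vt$ built into \eqref{eq:xit}, the generator of $\eta_{\cdot,t}$ on $\{t<\tau_M\}$ is
\begin{equation*}
\mathcal L_\epsilon f(\eta)=\sum_p\frac{r_p(\eta)}{\epsilon}\bigl[f(\eta+\sqrt\epsilon\, e_p)-f(\eta)\bigr]-\frac v{\sqrt\epsilon}\sum_p\partial_{\eta_p}f(\eta),
\end{equation*}
where $e_p$ is the unit vector in direction $p$.

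The core calculation is a Taylor expansion. Since $q=e^{-\epsilon}$ and $B_p=\epsilon^{-1}B+\epsilon^{-1/2}(\eta_{p_2}-\eta_p)+O(1)$ (with analogous formulas for $C_p,D_p$), one finds $q^{B_p}=e^{-B}[1-\sqrt\epsilon(\eta_{p_2}-\eta_p)+O(\epsilon(1+\|\eta\|^2))]$ and likewise for $q^{C_p+1},q^{D_p+1}$. Substituting into \eqref{eq:6prime} and expanding the quotient to first order in $\sqrt\epsilon$ yields
\begin{equation*}
r_p(\eta)=v+\sqrt\epsilon\sum_{p'}A_{p,p'}\eta_{p'}+O(\epsilon(1+\|\eta\|^2)),
\end{equation*}
with $A$ precisely the matrix \eqref{eq:A}: the three off-diagonal entries arise from the discrete gradients $\eta_{p_2}-\eta_p$, $\eta_p-\eta_{p_4}$, $\eta_p-\eta_{p_3}$ contributed by the expansions of $B_p,D_p,C_p$ respectively. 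Together with $f(\eta+\sqrt\epsilon\, e_p)-f(\eta)=\sqrt\epsilon\,\partial_{\eta_p}f+(\epsilon/2)\partial^2_{\eta_p}f+O(\epsilon^{3/2})$ for $f\in C_c^\infty(\mathbb R^{\cR_m})$, the singular contribution $(v/\sqrt\epsilon)\sum_p\partial_{\eta_p}f$ produced by the zeroth-order piece of $r_p$ cancels exactly against the deterministic drift (a consistency check on the hydrodynamic speed \eqref{eq:1}), and what remains converges uniformly on compacts in $\eta$ to
\begin{equation*}
\mathcal L f(\eta)=\sum_{p,p'}A_{p,p'}\eta_{p'}\partial_{\eta_p}f(\eta)+\frac v2\sum_p\partial^2_{\eta_p}f(\eta),
\end{equation*}
the generator of the SDE system \eqref{eq:2}.

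To promote this infinitesimal convergence to weak convergence on the Skorokhod space $D([0,T];\mathbb R^{\cR_m})$, I would invoke the classical Ethier--Kurtz convergence theorem for Markov processes. Tightness of the truncated family $\{\eta_{\cdot,\cdot\wedge\tau_M}\}_\epsilon$ follows from Aldous's criterion once we know $\sup_{|\eta|\le M,\,\epsilon<\epsilon_0}|\mathcal L_\epsilon f(\eta)|\le C(f,M)$, yielding weak convergence to the solution of \eqref{eq:2} stopped at the analogous time $\tau'_M$. The truncation is then removed by observing that (i) \eqref{eq:2} is a finite-dimensional linear SDE with a.s.\ continuous paths, so $\tau'_M\to\infty$ a.s.\ as $M\to\infty$; (ii) an a priori bound $\mathbb E[\sum_p\eta^2_{p,t\wedge\tau_M}]\le C(T)$ uniform in small $\epsilon$, obtained by applying $\mathcal L_\epsilon$ to $\sum_p\eta_p^2$ and invoking Gronwall, gives $\lim_{M\to\infty}\limsup_{\epsilon\to 0}\mathbb P(\tau_M\le T)=0$. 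The main technical subtlety lies in (ii): because the drift matrix $A$ encodes advection rather than damping and need not be contracting, the $L^2$ bound is not automatic, but a direct computation on the truncation event shows $\mathcal L_\epsilon(\sum_p\eta_p^2)\le C(1+\sum_p\eta_p^2)$, which is enough for Gronwall and closes the argument.
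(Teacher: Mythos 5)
Your proposal is correct in outline and reaches the same Taylor-expansion core as the paper (the linearization of the rate \eqref{eq:6prime} into $v+\sqrt\epsilon\sum_{p'}A_{p,p'}\eta_{p'}+o(\sqrt\epsilon)$, the observation that well-separated particles never push each other so each update is a single $+\sqrt\epsilon$ jump, and the appeal to Ethier--Kurtz; the paper verifies the drift and quadratic-variation conditions (4.6)--(4.7) of their Theorem 4.1, Chapter 7, which is equivalent to your generator-convergence formulation). Where you genuinely diverge is in the a priori localization. The paper truncates on the configuration-space event $\mathcal G$ of \eqref{eq:G} and bounds $\mathbb P(\tau_{\mathcal G}\le\epsilon^{-2})$ by exploiting the \emph{explicit stationary measure}: Lemma \ref{th:locmax} shows $\pi$ is asymptotically a Gaussian with quadratic form $v\hat R(k)$, the elementary bound $P_t(x,y)\le\pi(y)/\pi(x)$ then makes $\mathbb P(\sigma(t)\in\partial\mathcal G)$ super-polynomially small, and an occupation-time argument converts this into control of the exit time. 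This is model-specific but buys control up to the very long time scale $\epsilon^{-2}$, far beyond any fixed horizon. You instead localize with the soft stopping time $\tau_M=\inf\{t:\max_p|\eta_{p,t}|>M\}$ and remove the truncation via a Gronwall bound on $\mathbb E[\sum_p\eta_{p,t\wedge\tau_M}^2]$; this is more generic (it would apply to models with no known stationary measure) but only controls fixed macroscopic horizons $[0,T]$, which is all the theorem requires. Both routes are sound; your second-moment computation does close (the $\frac1{\sqrt\epsilon}\sum_p(r_p-v)\eta_p$ term reduces to $\sum_{p,p'}A_{p,p'}\eta_p\eta_{p'}+o(1)$ on the truncation event, so $\mathcal L_\epsilon(\sum_p\eta_p^2)\le C(1+\sum_p\eta_p^2)$ with $C$ independent of $M$ for $\epsilon<\epsilon_0(M)$), though to make it fully rigorous you should justify Dynkin's formula for the stopped jump process and spell out the Aldous tightness estimate.
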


\begin{Remark}
Note  that the matrix $A$ is not symmetric, so that
the diffusion \eqref{eq:2} is not reversible with respect to its
stationary  measure described by Theorem \ref{th:muinv}. 
In other words, the irreversibility of the microscopic dynamics survives also in the Gaussian
  limit.
\end{Remark}

\subsection{Properties of the matrix $A$}
It is convenient to work in Fourier space. For this purpose,
let
\begin{gather}
  \label{eq:6}
  f_k:\cR_m\mapsto \mathbb C, \quad f_k: p\mapsto \frac1m e^{-i p k}.
\end{gather}
  The set
$\{f_k\}_{ k\in \mathcal K_{m}}$,
 where
\begin{gather}
\mathcal K_{m}=\{((2\pi/m)r_1,(2\pi/m)\left(\frac CD r_1+r_2\right), r_1,r_2\in\mathbb Z, -m/2\le r_1,r_2<m/2\},
\end{gather}
forms an
orthonormal basis of $\mathbb C^{\cR_m}$. 
\begin{Remark}
Note that $\mathcal K_m$ was chosen such that, if we define
$f_k(p)=(1/m)e^{-i p k}$
for every $p\in\mathbb Z^2$, then $f_k(p)=f_k(p')$ if
$p\sim p'$ (use that $m_2/m=C/D$, see \eqref{eq:C}).
Also, observe that $|\mathcal K_m|=m^2$.
\end{Remark}
Define
\begin{eqnarray}
  \label{eq:hatxi}
  \hat \xi_{k,t}=\sum_{p\in \cR_m} \xi_{p,t}f_k(p),
\end{eqnarray}
so that
\begin{eqnarray}
  \label{eq:eta-eta}
 \xi_{p,t}=\sum_{k
\in \mathcal K_{m}}\hat\xi_{k,t}\overline{ f_k(p)},\quad \text{ and}\quad  \hat\xi_{-k,t}=\overline{\hat \xi_{k,t}}.
\end{eqnarray}
Let also
\begin{eqnarray}
  \label{eq:Ak}
  \hat A(k)=\sum_p A_{p,0}e^{-i p k}, k\in\mathbb R^2.
\end{eqnarray}

In our specific case, one sees that
\begin{eqnarray}
  \label{eq:9}
  \hat
  A(k)&=&A_{0,0}+A_{0,(1,-1)}e^{-i(k_1-k_2)}+A_{0,(-1,0)}e^{ik_1}+A_{0,(0,-1)}e^{i
    k_2}\\
\hat  A(k)+\hat
 A(-k)&=&2[A_{0,0}+A_{0,(1,-1)}\cos(k_1-k_2)\\&+&A_{0,(-1,0)}\cos(k_1)+A_{0,(0,-1)}\cos(k_2)].
\end{eqnarray}

Observe that we defined $\hat A(k)$ for any $k\in \mathbb R^2$ and not
just for $k\in \mathcal K_{m}$.

\begin{Proposition}
\label{prop:prop}
  The matrix $A$ satisfies the following properties:
  \begin{enumerate}
  \item Translation invariance: $A_{p,p'}=A_{p+r,p'+r}$ for every $r$;
  \item $\hat A(0)=\sum_{p'} A_{p,p'}=0$;
  \item 
\begin{eqnarray}
  \label{eq:37}
\hat R(k):=\hat A(k)+\hat A(-k)\le 0
\end{eqnarray}
and the only zero of $\hat R(k)$ on $[-\pi,\pi]^2$ is at $k=0$;
\item 
  \begin{eqnarray}
    \label{eq:rkw}
\hat R(k)= \hat W(k)+O(|k|^3)=(k, \hat W k)+O(|k|^3),    \quad k\to0,
  \end{eqnarray}
 with
$\hat W$ a strictly negative definite $2\times 2$ matrix and $\hat W(k)$ the associated quadratic form.
\item The function  $\hat A(\cdot)$ is $2\pi$-periodic and $C^\infty$ on $[-\pi,\pi]^2$.
  \end{enumerate}
\end{Proposition}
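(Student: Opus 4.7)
The proof naturally splits into two groups: items (1), (2), and (5) are direct inspections of \eqref{eq:A}, while (3) and (4) both rest on a single algebraic inequality between the entries of $A$. To keep formulas short, abbreviate the three off-diagonal coefficients by
\begin{equation*}
\alpha := \frac{e^{-B}(1-e^{-D})}{1-e^{-C}},\quad
\beta := \frac{e^{-D}(1-e^{-B})}{1-e^{-C}},\quad
\gamma := \frac{e^{-C}(1-e^{-B})(1-e^{-D})}{(1-e^{-C})^2},
\end{equation*}
all of which are strictly positive, so $A_{0,(1,-1)}=\alpha$, $A_{0,(-1,0)}=-\beta$, $A_{0,(0,-1)}=\gamma$, and $A_{0,0}=\beta-\alpha-\gamma$. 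Item (1) is immediate: \eqref{eq:A} depends on $(p,p')$ only through $p-p'$. Item (2) is the one-line cancellation $(\beta-\alpha-\gamma)+\alpha-\beta+\gamma=0$. Item (5) holds because $\hat A$ is a finite trigonometric polynomial in $k$.

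For item (3) I would first invoke (2) and the identity $1-\cos\theta=2\sin^2(\theta/2)$ to recast
\begin{equation*}
\tfrac{1}{4}\hat R(k) = -\alpha\sin^2 w + \beta\sin^2(w+v) - \gamma\sin^2 v,\qquad w := \tfrac{k_1-k_2}{2},\ v := \tfrac{k_2}{2},
\end{equation*}
then expand $\sin(w+v)=\sin w\cos v+\cos w\sin v$ to see the right-hand side as the quadratic form in $(\sin w,\sin v)$ with symmetric matrix
\begin{equation*}
M(v,w) = \begin{pmatrix} \beta\cos^2 v - \alpha & \beta\cos v\cos w \\ \beta\cos v\cos w & \beta\cos^2 w - \gamma \end{pmatrix}.
\end{equation*}
By Sylvester's criterion, strict negative definiteness of $M(v,w)$ for every $(v,w)$ reduces to the bound $\beta<\alpha$ (which gives $M_{11}<0$ uniformly) together with the determinant condition $\alpha\gamma > \beta(\alpha\cos^2 w + \gamma\cos^2 v)$, whose worst case $\cos^2 v=\cos^2 w=1$ collapses to the single algebraic inequality $\gamma(\alpha-\beta)>\alpha\beta$. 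The clean identity $\alpha-\beta=e^{-B}$, a direct consequence of $B+C=D$, reduces this last bound to
\begin{equation*}
\gamma e^{-B} - \alpha\beta = \frac{e^{-D}(1-e^{-B})^2(1-e^{-D})}{(1-e^{-C})^2} > 0,
\end{equation*}
which is manifest. Uniqueness of the zero on $[-\pi,\pi]^2$ then follows: strict negative definiteness of $M$ forces $\hat R(k)/4$ to vanish iff $\sin w=\sin v=0$, and on our domain this forces $k=0$.

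Item (4) reduces to a Taylor expansion around $k=0$. Since $\hat R$ is even in $k$ with $\hat R(0)=0$, using $1-\cos\theta=\theta^2/2+O(\theta^4)$ yields
\begin{equation*}
\hat R(k) = (\beta-\alpha)k_1^2 + 2\alpha k_1 k_2 - (\alpha+\gamma)k_2^2 + O(|k|^4),
\end{equation*}
which reads off the matrix $\hat W$. Its strict negative definiteness amounts to $\beta-\alpha<0$ together with $\det\hat W=\gamma(\alpha-\beta)-\alpha\beta>0$, precisely the inequality already secured in (3).

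The only substantive step is thus the algebraic bound $\gamma(\alpha-\beta)>\alpha\beta$, which simultaneously delivers the global negativity in (3) and the strict negative definiteness of $\hat W$ in (4). Its tractability rests on the single model-specific identity $\alpha-\beta=e^{-B}$, itself an immediate consequence of the constraint $B+C=D$ that defines the parameters.
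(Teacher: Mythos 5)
Your proof is correct, and for the only nontrivial item, (3), it takes a genuinely different route from the paper's. Items (1), (2), (5) are handled identically (the paper simply declares them trivial), and your key computation checks out: $\alpha-\beta=(e^{-B}-e^{-D})/(1-e^{-C})=e^{-B}$ by $D=B+C$, and $\gamma e^{-B}-\alpha\beta=e^{-D}(1-e^{-B})^2(1-e^{-D})/(1-e^{-C})^2>0$, which is exactly the quantity $\det\hat W=w^2$ of \eqref{eq:38}. For (3) the paper (Appendix \ref{app:Rk}) argues via critical points: it shows that the only stationary points of $\hat Q(k)=v\hat R(k)$ on the torus are the four half-period points, the key step being that the discriminant $\Delta$ of the stationarity equations is negative so no other real solutions exist; it then evaluates $\hat Q$ at those points and appeals to the nondegenerate Hessian at $0$. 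You instead write $\hat R(k)/4$ as a quadratic form in $(\sin w,\sin v)$ with a $(v,w)$-dependent symmetric matrix $M$ and run Sylvester's criterion pointwise; the monotonicity of $\det M=\alpha\gamma-\beta\gamma\cos^2 v-\alpha\beta\cos^2 w$ in $\cos^2 v,\cos^2 w$ correctly identifies the worst case, and your zero-set discussion is sound ($\sin v=0$ with $v\in[-\pi/2,\pi/2]$ forces $k_2=0$, after which $\sin(k_1/2)=0$ forces $k_1=0$). Your route is more elementary — no calculus of stationary points, no discriminant — it yields strict negativity off $k=0$ directly rather than through a global-maximum argument, and it makes visible that the single inequality $\gamma(\alpha-\beta)>\alpha\beta$ simultaneously governs (3) and the strict negative definiteness of $\hat W$ in (4), which the paper instead deduces from (3) together with the explicit value of $\det\hat W$. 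The only thing the paper's approach provides that yours does not is the complete list of critical points of $\hat R$, which the proposition does not require.
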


All properties are trivial to check, except for (3) which is proven in
Appendix \ref{app:Rk}. The fact that
\begin{gather}
\hat W(k):=-A_{0,(1,-1)}(k_1-k_2)^2-A_{0,(-1,0)}k_1^2-A_{0,(0,-1)}k_2^2
\end{gather}
is a strictly negative definite quadratic form
follows from negativity of $\hat
R(k)$, together with the fact that
\begin{eqnarray}
  \label{eq:38}
\det(\hat W)=\frac{e^{-D}(1-e^{-D})(1-e^{-B})^2}{(1-e^{-C})^2}=:w^2>0.
\end{eqnarray}
For later convenience, let  the $2\times 2$ matrix $V$ be such that
\begin{eqnarray}
  \label{eq:Vv}
V \hat W V^T=-I.
\end{eqnarray}

Let us also define
  \begin{eqnarray}
    \label{eq:Uu}
    U:=i\nabla \hat
    A(0)=\sum_{p}p A_{0,p}=  (A_{0,(1,-1)}-A_{0,(-1,0)},-A_{0,(1,-1)}-A_{0,(0,-1)})\in
    \mathbb R^2,
  \end{eqnarray}
 and note that $U\ne 0$ for any choice of $B,C,D=B+C$.

\begin{Remark}
A few comments are in order:
  \begin{itemize}
  \item Property (1) (translation invariance of $A$) originates from
    the fact that we are considering fluctuations around a
    ``trivial'', translation invariant hydrodynamic limit where
    particles have positions $X_p+\epsilon^{-1}v t$ and are therefore
    equi-spaced at all times. This is the property that makes it
    convenient to work in Fourier space (e.g., property (1) is behind
    the fact that Eq. \eqref{eq:dxik} is diagonal in $k$);

  \item Property (2) means that the  drift of a particle $p$ is unchanged if all
    particles are globally shifted by the same amount. This is a consequence of the fact that transition rates  \eqref{eq:6prime} of the microscopic
    particle process  depend only on inter-particle
    distances, and would hold even if we studied fluctuations w.r.t. a
    non-translation invariant  hydrodynamic limit (cf. Section \ref{sec:pde});

  \item Property (3) guarantees that there exists a stationary measure for
gradients $\xi_p-\xi_{p'}$, see Theorem \ref{th:muinv} and formula
\eqref{eq:inv}.
Negative-definiteness of $\hat A(k)+\hat A(-k)$ corresponds to the
fact that
in the hydrodynamic scaling the particle configuration is ``crystalline'' at all
times, and that crystalline  configurations are an (at least local)
maximum of the stationary  measure $\pi(\cdot)$,
see Lemma \ref{th:locmax}.

  \end{itemize}

\end{Remark}

\subsection{A conjectural hydrodynamic equation}

\label{sec:pde}
We will briefly consider the hydrodynamic behavior of the
$q$-Whittaker particle system with general initial data (which may not
be close to the crystalline configurations considered above). We
provide a heuristic derivation of the hydrodynamic (law of large
numbers) PDE satisfied by the limit. For us, the purpose of this
derivation is to justify and explain our use of the word
"characteristic" to describe the direction $U$ of slow
decorrelation. In this hydrodynamic limit we will first take
$\epsilon\to0$ and then $m\to\infty$. With this in mind, let us rescale
space and time by $m/\epsilon$ and define 
\[
h(\tau,y)=(\epsilon/m) x_{\lfloor y m\rfloor }(\tau m/\epsilon)
\]
with $y \in [0,1]^2$, $\tau\ge0$, and
where the r.h.s. has to be taken modulo
$(\ell/\epsilon)\times(\epsilon/m)=D$ because $x_p$ is defined modulo $L=\ell/\epsilon$.
Note that, as a consequence of Remark \ref{rem:bcc1}, $h(\tau,\cdot)$ satisfies
\[
h(\tau,y+(j_1,j_2))=h(\tau,y+(Cj_2/D,0)), \quad j_1,j_2\in \mathbb Z.
\]
If at time zero the configuration satisfies the
conditions of Theorem \ref{th:SDE}, then $h(0,y)$ is close to a  
linear function:
\[
H(0,y):=\lim_{m\to\infty}\lim_{\epsilon\to0}h(0,y)=D y_1+C y_2.
\]
In this case, it follows from Theorem \ref{th:SDE} that, for $\tau>0$,
the limit 
\begin{eqnarray}
  \label{eq:hlim}
H(\tau,y):=\lim_{m\to\infty}\lim_{\epsilon\to0}h(\tau,y)  
\end{eqnarray}
exists and solves
\begin{eqnarray}
  \label{eq:55}
\partial_\tau H=v   
\end{eqnarray}
with $v(C,D)$ as in \eqref{eq:1}.
Given that $D=\partial_{y_1} H$ and $C=\partial_{y_2} H$, and using
$B=D-C$, we see that  
\begin{eqnarray}
  \label{eq:56}
v(C,D)=v(\nabla H)=\frac{(1-\exp(\partial_{y_2}H-\partial_{y_1}
  H))(1-\exp(-\partial_{y_1}H))}{(1-\exp(-\partial_{y_2}H))}.
\end{eqnarray}

If we assume instead that the initial condition satisfies
\[
\lim_{m\to\infty}\lim_{\epsilon\to0} h(0,y)=H(0,y),
\]
with $H(0,\cdot)$ some smooth enough but non-linear function, then
it is natural to conjecture that the limit \eqref{eq:hlim}
exists and still satisfies \eqref{eq:55}, with $v=v(\nabla H)$ equal
to the r.h.s. of \eqref{eq:56}.
It is easy to see that the characteristic lines of the PDE
\eqref{eq:55} are the straight lines
$y=U t$ with $U$ as in \eqref{eq:Uu}. In fact, the characteristic
lines are determined by 
\[
y_i(t)=t \nabla_i v,\quad i=1,2
\]
with $\nabla_i v$ the derivative of $v=v(\nabla h)$ with respect to
its $i$-th argument. Explicitly,
\[\nabla_1 v=\frac{e^{\partial_{y_2}H-\partial_{y_1}H}(1-e^{-\partial_{y_1}H})}{1-e^{-\partial_{y_2}H}}+\frac{e^{-\partial_{y_1}H}(1-e^{\partial_{y_2}H-\partial_{y_1}H})}{1-e^{-\partial_{y_2}H}}=A_{0,(1,-1)}-A_{0,(-1,0)}=U_1
\]
and similarly $\nabla_2 v=U_2$. We emphasize that the identity
$\partial_i v=U_i$ is not
a coincidence. Indeed, view the hydrodynamic speed $v$ as a function of the relative
horizontal distances
between particle, say, $0$ and the other particles $p$ (in our case, the
dependence is only through the three neighbors $p_2,p_3,p_4$ of $0$).
On one hand, since the SDEs \eqref{eq:2} describe a linearization of the
stochastic 
dynamics around the hydrodynamic limit, $A_{0,p}$ is obtained taking the
derivative of $v$ w.r.t. the position $x_p$ of particle $p$, with the
others kept fixed. On the other hand, if the slope $\nabla_i H$ is
changed by $\epsilon$, the distance between particles $0$ and
$p=(p^{(1)},p^{(2)})$ changes by $\epsilon\times p^{(i)}$.
Therefore,
\begin{eqnarray}
  \label{eq:16}
  \nabla_{i} v=\sum_{p=(p^{(1)},p^{(2)})}p^{(i)}A_{0,p}=U_i.
\end{eqnarray}

This shows that the identity  $U_i =\nabla_i v$ is not related to the particular form of the function
$v(\cdot)$. If we had another interacting particle process for which we
could prove convergence to a hydrodynamic equation and convergence of
fluctuations to a system of linear SDEs with a matrix $A$ satisfying
the conditions in Proposition \ref{prop:prop}, we would have
automatically that the direction of slow decay of correlations would
coincide with the characteristic lines of the PDE.

\section{Correlations of the Gaussian system}

In this section we study the space-time correlations of the SDE system
\eqref{eq:2}. Since the equations are linear, they
can be solved explicitly.

We formulate the results in wider generality. Again, the solution
to \eqref{eq:2}  with initial condition $\bar \xi$ is denoted $\{\xi_{p,t}\}_{t\ge0, p\in \cR_m}$  and we let $\mathbb P_{\bar \xi}$ be its law. The matrix $A=\{A_{p,p'}\}_{p,p'\in \cR_m}$  is not necessarily given by \eqref{eq:A}
but is required to satisfy
 properties (1)--(5) of Proposition \ref{prop:prop}.
Theorems \ref{th:2} and \ref{th:muinv} hold in this generality.

Let $\mathcal C^{\bar \xi}(t,s)$ be the $m^2\times m^2$ symmetric matrix
\begin{gather}
  \label{eq:3}
  \mathcal C_{p,p'}^{\bar \xi}(t,s)=\mathbb E_{\bar \xi}
  [\xi_{p,t}\xi_{p',s}], \quad p,p'\in \cR_m
\end{gather}
and $\mathcal M^{\bar\xi}(t)\in \mathbb R^m$ be the vector
\begin{gather}
  \mathcal M_p^{\bar \xi}(t)=\mathbb E_{\bar \xi} [\xi_{p,t}], \quad p\in \cR_m
\end{gather}
with of course $\mathcal C^{\bar \xi}_{p,p'}(0,0)=\bar\xi_p\bar\xi_{p'}$,
$\mathcal M^{\bar \xi}_{p}(0)=\bar\xi_p$.
By It${\rm \bar o}$'s lemma,
\begin{gather}
  \label{eq:4}
\frac d{dt}\mathcal M_p^{\bar \xi}=\sum_{p'} A_{p,p'}\cM^{\bar \xi}_{p'}(t),\\
\nonumber
  \frac d{dt}\mathcal C^{\bar \xi}(t,t)=v\mathbb I +\mathcal C^{\bar
    \xi}(t,t)A^T+A \mathcal C^{\bar \xi}(t,t),\\
\nonumber
  \frac d{dt}\mathcal C(t,s)=A \mathcal C(t,s),\quad t>s,
\end{gather}
with $A^T$ the transpose of $A$ and $\mathbb I$ the identity matrix.

Let also
\begin{gather}
  \label{eq:21}
  \mathcal W^{\bar \xi}_{p,p'}(t,s):=\mathcal C_{p,p'}^{\bar
  \xi}(t,s)-\cM^{\bar \xi}_p(t) \cM^{\bar \xi}_{p'}(s). 
\end{gather}

\begin{Theorem}
\label{th:2}  Let $A$ satisfy the properties of Proposition \ref{prop:prop}. Then,  $\mathcal W^{\bar \xi}_{p,p+y}$ does not depend on  $\bar \xi$ or $p$  and the  limit
\begin{eqnarray}
  \label{eq:WW}
  \mathbb W_{y}(t,s)= \lim_{m\to\infty}\cW^{\bar\xi}_{p,p+y}(t,s)
\end{eqnarray}
exists for any given $y\in\mathbb Z^2$. Moreover,
\begin{eqnarray}
  \label{eq:44}
  \mathbb W_{y}(t,s)=
 \frac v{4\pi w}\int_{1+(t-s)/2}^{1+(t+s)/2}\frac{e^{-|H|^2/(4a)}}a da
+ j(t,s,y),
\end{eqnarray} where  $w>0$ is defined in \eqref{eq:38}, $H=V y+(t-s)V U$, $V$ is defined in \eqref{eq:Vv}, $U=i\nabla \hat A(0)$, and
$j$ satisfies
\begin{eqnarray}
  \label{eq:condizionij}
  \sup_{t,s,y}|j(t,s,y)|<\infty, \quad \lim_{\max(t-s,|y|)\to\infty}j(t,s,y)=0.
\end{eqnarray}

\end{Theorem}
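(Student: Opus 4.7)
The plan is to diagonalize \eqref{eq:2} in Fourier space, derive an explicit closed-form expression for $\cW^{\bar\xi}$, pass to the infinite-volume limit via a Riemann-sum argument, and finally extract the main term by a local Gaussian analysis near $k=0$. By the symmetry $\mathbb W_y(t,s)=\mathbb W_{-y}(s,t)$ of the covariance, I may assume $t\ge s$ throughout.

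\emph{Step 1 (Fourier decoupling).} Property (1) of Proposition \ref{prop:prop} tells us $A$ is translation-invariant and hence diagonalized by the basis $\{f_k\}_{k\in\mathcal K_m}$. In the variables \eqref{eq:hatxi}, the system \eqref{eq:2} decouples into scalar linear SDEs
\[d\hat\xi_{k,t}=\hat A(k)\,\hat\xi_{k,t}\,dt+\sqrt v\,d\hat W_{k,t},\qquad \mathbb E[d\hat W_{k,t}\,d\hat W_{k',t}]=\delta_{k+k',0}\,dt,\]
whose Duhamel solutions split into a deterministic part carrying $\cM^{\bar\xi}$ and a stochastic integral carrying the centered fluctuation. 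Thus $\cW^{\bar\xi}_{p,p+y}$ depends neither on $\bar\xi$ nor, by translation invariance, on $p$. Computing the It\^o covariance of the stochastic parts, using $\hat A(k)+\hat A(-k)=\hat R(k)$, and inverting the Fourier transform gives
\[\cW_{p,p+y}(t,s)=\frac{v}{m^2}\sum_{k\in\mathcal K_m}e^{\hat A(k)(t-s)}\int_0^s e^{\hat R(k)u}\,du\cdot e^{-iyk}.\]
The summand has modulus $e^{\hat R(k)(u+(t-s)/2)}\le 1$ by (3) and is smooth and $2\pi$-periodic by (5); dominated convergence then yields, as $m\to\infty$,
\[\mathbb W_y(t,s)=\frac{v}{(2\pi)^2}\int_{[-\pi,\pi]^2}e^{\hat A(k)(t-s)}\int_0^s e^{\hat R(k)u}\,du\cdot e^{-iyk}\,dk.\]

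\emph{Step 2 (Gaussian asymptotics at $k=0$).} I split the $k$-integral at some fixed $|k|=\delta>0$. On the outer region $|k|\ge\delta$, property (3) forces $\hat R(k)\le-c<0$, so after integrating in $u$ this piece is $O(e^{-c(t-s)/2})$; smoothness of $\hat A$ plus integration by parts against $e^{-iyk}$ moreover produces $|y|^{-N}$ decay for any $N$, and both contributions are absorbed into $j$. On the inner region $|k|<\delta$, expand using (2) and (4)
\[\hat A(k)=-iU\cdot k+\tfrac12\hat W(k)+O(|k|^3),\qquad \hat R(k)=\hat W(k)+O(|k|^4),\]
so the exponent becomes $\hat W(k)[u+(t-s)/2]-ik\cdot[y+(t-s)U]$ plus a cubic remainder. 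Set $a=u+(t-s)/2$ and perform the linear change of variables $K=V^{-T}k$ so that $\hat W(k)=-|K|^2$, $k\cdot[y+(t-s)U]=K\cdot H$, and $dk=(1/w)\,dK$ by \eqref{eq:38}--\eqref{eq:Vv}. Extending the $K$-integral from a small disk to $\mathbb R^2$ (at an exponentially small cost in $a\delta^2$) and applying $\int_{\mathbb R^2}e^{-a|K|^2-iK\cdot H}\,dK=(\pi/a)e^{-|H|^2/(4a)}$ yields the main contribution
\[\frac{v}{4\pi w}\int_{(t-s)/2}^{(t+s)/2}\frac{e^{-|H|^2/(4a)}}{a}\,da.\]
Shifting both endpoints by $+1$ as in \eqref{eq:44} changes this by integrals over two unit-length intervals near the endpoints, each bounded by $1/(1+(t-s)/2)$, $1/(1+(t+s)/2)$ respectively, with additional Gaussian suppression when $|H|$ is large. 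The shift serves to regularize the logarithmic divergence of $1/a$ at $a=0$ that appears when $t=s$. All such discrepancies --- cubic Taylor remainder, Gaussian-tail extension, outer-region contribution, and endpoint regularization --- are collected into $j(t,s,y)$.

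\emph{Main obstacle.} The delicate point is producing a single $j(t,s,y)$ uniformly bounded in $t,s,y$ and vanishing as $\max(t-s,|y|)\to\infty$. The characteristic $|k|$-scale $a^{-1/2}$ of the Gaussian weight varies across the $a$-integral, so the cubic Taylor error, contributing a factor $O(|k|^3 a)$ inside the exponent, must be controlled at each $a\in((t-s)/2,(t+s)/2)$ and glued to a uniform bound --- a nontrivial task when $|H|/\sqrt a$ is not small. When $t-s$ is bounded but $|y|\to\infty$, the required decay cannot come from the Gaussian weight and must be extracted instead from the oscillatory phase by integration by parts in $k$ (using (5)), in a way compatible with the asymptotic matching to the large-$(t-s)$ regime. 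Coordinating these arguments around the only marginally integrable $1/a$ behavior, so that the right constants $v/(4\pi w)$ survive while everything else falls into $j$, constitutes the main technical work.
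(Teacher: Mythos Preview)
Your overall route matches the paper's: Fourier-diagonalize, pass to the $m\to\infty$ Riemann integral, localize near $k=0$, Taylor-expand $\hat A$ and $\hat R$, change variables via $V$, and compute a Gaussian integral. The explicit finite-$m$ formula you obtain in Step~1 agrees with the paper's (your $\int_0^s e^{\hat R(k)u}\,du$ is the paper's $(e^{\hat R(k)s}-1)/\hat R(k)$).

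There is, however, a genuine gap in Step~2, exactly at the regularization you flag as the main obstacle. Your intermediate ``main term''
\[
\frac{v}{4\pi w}\int_{(t-s)/2}^{(t+s)/2}\frac{e^{-|H|^2/(4a)}}{a}\,da
\]
is infinite when $t=s$ and $y=0$ (then $H=0$ and the integrand is $1/a$), whereas $\mathbb W_0(t,t)$ is a finite variance. The step that breaks is the extension of the $K$-integral from the disk to $\mathbb R^2$: you note the cost is ``exponentially small in $a\delta^2$'', but this is $O(1)$ when $a$ is near $0$, and indeed the full-space integral $\pi/a$ diverges there while the disk integral is bounded by its area. Your proposed repair --- shift both endpoints by $+1$ afterwards and bound the discrepancy by $1/(1+(t-s)/2)$ --- is also incorrect: for $H=0$ the lower-endpoint shift contributes $\int_{(t-s)/2}^{1+(t-s)/2}\frac{da}{a}=\log\bigl(1+2/(t-s)\bigr)$, which blows up as $t-s\to 0$. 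You are effectively subtracting two infinities.

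The paper avoids this by inserting the regularization \emph{before} removing the cutoff. With $\chi$ still in place, it multiplies the integrand by $e^{\hat W(k)}$; since $(e^{\hat W(k)}-1)/\hat W(k)$ is smooth at $k=0$, this costs only a bounded error with the required decay. After this multiplication the effective Gaussian scale is $a+1\ge 1$ uniformly, so removing $\chi$ and extending to $\mathbb R^2$ is legitimate for every $a$, and the $+1$ in the limits $1+(t\pm s)/2$ appears directly from the extra factor $e^{\hat W(k)}$ rather than from an a~posteriori endpoint shift.
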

If the matrix $A$ is the one given in \eqref{eq:A}, using \eqref{eq:38} and \eqref{eq:1}, one sees that
\begin{eqnarray}
  \label{eq:sees}
 \frac v{ w}
= {\sqrt{e^{D}-1}}.
\end{eqnarray}

From \eqref{eq:44} one can obtain all desired asymptotics. For instance,
one can obtain equal-time correlations.
\begin{Corollary}[Equal-time correlations]
\label{cor1}
For $y=0$ one has
\begin{eqnarray}
  \label{eq:44bis}
  \limsup_{t\to\infty}\left|\mathbb W_{0}(t,t)- \frac v{4\pi w}\log t\right|<\infty.
\end{eqnarray}
For $y\ne0$ one finds, with $Y=Vy$ and after the change of variables $a=|Y|^2/(4x)$,
\begin{gather}
 \mathbb W_{y}(t,t)= \frac v{4\pi w}\int_{|Y|^2/4(1+t)}^{|Y|^2/4}\frac{e^{-x}}x\;dx
+j(t,t,y)  .
\end{gather}
This implies that
\begin{eqnarray}
  \label{eq:44ter}
\lim_{\substack {|y|\to\infty, t\to\infty\\ |y|=O(\sqrt t)}}
\left(\bW_{y}(t,t)-\frac v{4\pi w}\log(4(t+1)/|Y|^2)\right)=0
\end{eqnarray}
where we used the fact that
\begin{eqnarray}
  \label{eq:c-c+}
c_-\le \frac{|Y|}{|y|}\le c_+
\end{eqnarray}
 for some non-zero constants $c_\pm$ (the lower bound holds because the determinant of $V$ is not zero).

If instead  both $|y|,t$ diverge and $|y|\gg \sqrt t$, then $ \mathbb W_{y}(t,t)=o(1)$.

The same results hold if $ t-s=O(1)$.
\end{Corollary}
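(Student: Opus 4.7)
The plan is to specialize the master formula of Theorem \ref{th:2} to $s=t$ and extract the stated asymptotics via standard bounds on the exponential integral $E_1(u)=\int_u^\infty e^{-x}/x\,dx$. Setting $s=t$ kills the drift piece $(t-s)VU$ in $H$, so $H=Vy=:Y$ is time-independent, the integration range in \eqref{eq:44} collapses to $[1,1+t]$, and
\begin{equation*}
\bW_y(t,t)=\frac{v}{4\pi w}\int_1^{1+t}\frac{e^{-|Y|^2/(4a)}}{a}\,da+j(t,t,y).
\end{equation*}
I would then split into the three cases $y=0$, $y\neq 0$ with $|y|=O(\sqrt t)$, and $|y|\gg\sqrt t$.

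For $y=0$ the integrand is $1/a$, the integral equals $\log(1+t)$, and since $\sup|j|<\infty$ by Theorem \ref{th:2} the limsup bound \eqref{eq:44bis} is immediate. For $y\ne 0$, I would perform the change of variables $x=|Y|^2/(4a)$ indicated in the statement; this maps $a\in[1,1+t]$ to $x\in[|Y|^2/(4(1+t)),|Y|^2/4]$ with $da/a\mapsto -dx/x$, producing the stated identity. In particular the integral equals $E_1(\alpha)-E_1(\beta)$ with $\alpha:=|Y|^2/(4(1+t))$ and $\beta:=|Y|^2/4$.

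To analyze the joint limit $|y|,t\to\infty$ with $|y|=O(\sqrt t)$, I would combine two classical facts: $E_1(u)=O(e^{-u}/u)$ as $u\to\infty$, and the small-argument expansion $E_1(u)=-\gamma-\log u+O(u)$. The bilipschitz estimate \eqref{eq:c-c+} gives $|Y|\asymp|y|$, so $\alpha$ remains bounded and $\beta\to\infty$. The first estimate kills $E_1(\beta)$, while the second (together with continuity of $E_1$ on compact subsets of $(0,\infty)$) gives $E_1(\alpha)=-\log\alpha+O(1)=\log(4(t+1)/|Y|^2)+O(1)$. Combining this with the clause $j(t,t,y)\to 0$ as $|y|\to\infty$ from Theorem \ref{th:2} yields \eqref{eq:44ter}. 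For $|y|\gg\sqrt t$, both $\alpha,\beta\to\infty$, so both $E_1$ values decay exponentially, and together with $j\to 0$ this gives $\bW_y(t,t)=o(1)$.

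For the final clause (same results for $t-s=O(1)$ rather than $t=s$), the vector $H=Vy+(t-s)VU$ differs from $Y$ by a bounded quantity, so $|H|=|Y|+O(1)$ and the range of $a$ also shifts by $O(1)$; the preceding $E_1$ analysis applies verbatim, with the $O(1)$ perturbations absorbed into the error terms. The main technical care is in the bookkeeping of bounded additive constants (Euler--Mascheroni and otherwise) across the sub-regimes, confirming they are either absorbed into $j(t,t,y)$ or are negligible against the diverging $\log(4(t+1)/|Y|^2)$ in the relevant limit; beyond this, the argument is simply a change of variables together with textbook asymptotics for $E_1$.
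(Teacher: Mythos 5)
Your route is the same as the paper's: the corollary is read off directly from \eqref{eq:44} by setting $s=t$ (so that $H=Vy=Y$ and the integration range becomes $[1,1+t]$) and performing the change of variables $a=|Y|^2/(4x)$ indicated in the statement; the paper supplies no further argument. Your treatment of the case $y=0$, the change of variables itself, the regime $|y|\gg\sqrt t$, and the extension to $t-s=O(1)$ are all correct.

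One step, however, does not close as you state it: the deduction of \eqref{eq:44ter} with the limit equal to $0$. Write $\alpha=|Y|^2/(4(1+t))$, $\beta=|Y|^2/4$ and $E_1(u)=\int_u^\infty e^{-x}x^{-1}dx$ as in your proposal. Your expansion $E_1(\alpha)=-\gamma-\log\alpha+O(\alpha)$ gives
$\bW_y(t,t)-\frac{v}{4\pi w}\log\bigl(4(t+1)/|Y|^2\bigr)=\frac{v}{4\pi w}\bigl(E_1(\alpha)+\log\alpha\bigr)-\frac{v}{4\pi w}E_1(\beta)+j(t,t,y)$,
and while $E_1(\beta)\to0$ and $j\to0$, the quantity $E_1(\alpha)+\log\alpha$ does \emph{not} tend to $0$: it tends to $-\gamma$ if $\alpha\to0$, and to the (generically nonzero) constant $E_1(\alpha_0)+\log\alpha_0$ along subsequences with $\alpha\to\alpha_0>0$, which the hypothesis $|y|=O(\sqrt t)$ permits (e.g.\ $|y|\sim c\sqrt t$). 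Your closing remark that such additive constants are ``negligible against the diverging $\log(4(t+1)/|Y|^2)$'' is not available here: in the regime $|y|\asymp\sqrt t$ that logarithm stays bounded; moreover \eqref{eq:44ter} concerns a difference, not a ratio, so even a diverging logarithm would not absorb an additive constant; nor can the constant be pushed into $j$, which is fixed by Theorem \ref{th:2} and vanishes as $|y|\to\infty$. What your estimates actually establish is $\limsup|\cdots|<\infty$ in \eqref{eq:44ter}, on par with \eqref{eq:44bis}. This imprecision is arguably present in the corollary as stated, but you should not claim to derive ``$\lim=0$'' from an $O(1)$ expansion; either prove the sharper cancellation (which is false here) or weaken the conclusion to a bounded $\limsup$.
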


As for correlations at different times, the following result shows
that the behavior is special  along the space-time lines $y=t U$ (that
will be called ``characteristics'', in view of the discussion in
Section \ref{sec:pde}), with $U=i\nabla \hat A(0)=\sum_p p A_{p,0} \in \mathbb R^2$. We will assume that $t-s\gg1$, since  the case $t-s=O(1)$ is effectively covered by the previous Corollary.
\begin{Corollary}[Correlations along the characteristics]
\label{cor2}
We deduce from \eqref{eq:44}
\begin{gather}
\label{eq:along}
\limsup_{t-s\to\infty}\left(\bW_{\lfloor U(t-s)\rfloor}(t,s)
  -\frac v{4\pi w}\log\frac{t+s}{t-s}\right)=0,
\end{gather}
in particular $\bW_{\lfloor U(t-s)\rfloor}(t,s)$
 is large if $t-s\ll t$.

If instead  $u\ne U$, we have
\begin{gather}
\limsup_{t-s\to\infty}\left(\bW_{\lfloor u(t-s)\rfloor}(t,s)
  -\frac v{4\pi w}\int_{\frac{(t-s)^2|V(U-u)|^2}{2(t+s)}}^\infty \frac{e^{-x}}xdx\right)=0;
\end{gather}
from this we deduce that
\begin{eqnarray}
  \label{eq:nantra}
  \limsup_{t\to\infty,(t-s)/\sqrt t\to\infty}\bW_{\lfloor u(t-s)\rfloor}(t,s)=0,
\end{eqnarray}
 while
\begin{gather}
\limsup_{t-s\to\infty, t-s=O(\sqrt t)}\left|\bW_{\lfloor u(t-s)\rfloor}(t,s)- \frac v{4\pi w} [\log t-2\log(t-s)]\right|
<\infty.
\end{gather}

\end{Corollary}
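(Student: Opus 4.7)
The plan is to derive both claims by direct substitution into the closed-form expression \eqref{eq:44} of Theorem \ref{th:2} and an elementary one-variable asymptotic analysis. The remainder $j(t,s,y)$ is uniformly bounded by \eqref{eq:condizionij} and vanishes as soon as $\max(t-s,|y|)\to\infty$; consequently it contributes $o(1)$ throughout the regimes considered (or at worst $O(1)$, which is harmless under a $\limsup$ of a bounded-error statement), so it suffices to analyse
\begin{equation*}
I(t,s,y):=\frac{v}{4\pi w}\int_{1+(t-s)/2}^{1+(t+s)/2}\frac{e^{-|H|^2/(4a)}}{a}\,da,\qquad H=Vy+(t-s)VU.
\end{equation*}

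In the characteristic case $y=\lfloor U(t-s)\rfloor$, the choice of $y$ makes the two contributions to $H$ cancel up to the integer-part discretization, so $|H|=O(1)$. Since $a\ge 1+(t-s)/2$, one gets $|H|^2/(4a)=O(1/(t-s))\to 0$ uniformly on the integration range, and $e^{-|H|^2/(4a)}=1+o(1)$. Hence
\begin{equation*}
I(t,s,y)=(1+o(1))\frac{v}{4\pi w}\log\frac{1+(t+s)/2}{1+(t-s)/2}=\frac{v}{4\pi w}\log\frac{t+s}{t-s}+o(1),
\end{equation*}
which yields \eqref{eq:along}.

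For $y=\lfloor u(t-s)\rfloor$ with $u\neq U$, the substitution gives $H=V(u-U)(t-s)+O(1)$, so $|H|^2=|V(U-u)|^2(t-s)^2(1+o(1))$ with $|V(U-u)|>0$ since $V$ is invertible. The change of variable $x=|H|^2/(4a)$ (sending the interval in $a$ to the decreasing interval in $x$ with $da/a=-dx/x$) converts the integral into $\int_{\alpha_-}^{\alpha_+}\frac{e^{-x}}{x}\,dx$, where $\alpha_+\sim |V(U-u)|^2(t-s)/2\to\infty$ and $\alpha_-\sim (t-s)^2|V(U-u)|^2/(2(t+s))$. Extending $\alpha_+$ to $\infty$ costs only $O(e^{-\alpha_+}/\alpha_+)=o(1)$, producing the integral displayed in the corollary. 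When $(t-s)/\sqrt t\to\infty$ one has $\alpha_-\to\infty$, so the integral vanishes and \eqref{eq:nantra} follows. When $t-s\to\infty$ with $t-s=O(\sqrt t)$, $\alpha_-$ stays in a bounded set, and the standard exponential-integral asymptotic $\int_\alpha^\infty e^{-x}/x\,dx=-\log\alpha+O(1)$ (valid uniformly on any bounded subset of $(0,M]$, including as $\alpha\downarrow 0$) gives
\begin{equation*}
I(t,s,y)=\frac{v}{4\pi w}\bigl[-\log\alpha_-\bigr]+O(1)=\frac{v}{4\pi w}\bigl[\log t-2\log(t-s)\bigr]+O(1),
\end{equation*}
where we used $\log(t+s)=\log t+O(1)$ in this regime; this is the last claim of the corollary.

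The only obstacle worth flagging is bookkeeping: verifying uniformity of the $o(1)$ and $O(1)$ remainders in each regime, and matching the sign convention of $U$ in \eqref{eq:44} with the one defining the characteristic direction in \eqref{eq:Uu} so that the cancellation $|H|=O(1)$ in the characteristic case is genuine. No tool beyond the Gaussian integral leading to \eqref{eq:44} and the $E_1$-asymptotic is required.
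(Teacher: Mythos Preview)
Your proposal is correct and is exactly the approach the paper intends: Corollary~\ref{cor2} is presented without a separate proof, as a direct reading of \eqref{eq:44}, and your substitution, the change of variable $x=|H|^2/(4a)$, and the exponential-integral asymptotic are precisely the elementary computations required. Your caveat about the sign convention of $U$ in $H$ is warranted (as written, $H=Vy+(t-s)VU$ with $y=\lfloor U(t-s)\rfloor$ does not cancel, so there is a harmless sign slip between \eqref{eq:44} and \eqref{eq:Uu}/\eqref{eq:along} in the paper), but the intended cancellation is clear from the derivation in Appendix~\ref{app:326} and your argument goes through once it is fixed.
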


Summarizing: along the characteristic correlations are large as soon
as $(t-s)\ll t$ and grow proportionally to $\log t$ if $t-s\le
t^a,a<1$. For all other space-time directions, correlations are large only if $t-s\ll \sqrt t$.

Equation \eqref{eq:along} shows that, along the characteristic, there is a limit for the
correlation as $t,s\to\infty$  with $t-s$ and $t+s$ of comparable size. This
suggests that in this scaling  the whole fluctuation field near the characteristic  has a non-trivial limit process which, in the following statement,
we identify as being related to the Stochastic Heat Equation.
\begin{Corollary}
\label{cor3}
Define
\begin{eqnarray}
  \label{eq:43}
  h_{x,t}=a\xi_{\lfloor t U+V^{-1}x\rfloor ,t},\quad
  a=\sqrt{\frac{4\pi w}{8v}}.
\end{eqnarray}
Then,  for any given $x,y,$ $0<s<t$, from \eqref{eq:44} we get
\begin{gather}
  \label{eq:45}
\lim_{\delta\to0}\left[ \mathbb E_{\bar\xi}(h_{\delta^{-1/2}x,\delta^{-1}t}\;h_{\delta^{-1/2}y,\delta^{-1}s})-  \mathbb
  E_{\bar\xi}(h_{\delta^{-1/2}x,\delta^{-1}t})\mathbb E_{\bar \xi}(h_{\delta^{-1/2}y,\delta^{-1}s})\right]\\=
\frac 18\int_{(t-s)/2}^{(t+s)/2}\frac{e^{-|x-y|^2/(4a)}}a da
.
\end{gather}
In other words, in this limit the randomly evolving height field $h_{\cdot,\cdot}$ has
the same space-time correlations as the
additive stochastic heat equation in $2$ spatial dimensions,
\begin{eqnarray}
  \label{eq:53}
  \partial_t h=\Delta h+ \dot W
\end{eqnarray}
with $\dot W$ the $(2+1)$-dimensional space-time white noise
 (compare \eqref{eq:45} with the formal expression derived in \cite[Formula
(2.8)]{Hairer} for the covariance of the stochastic heat equation).
\end{Corollary}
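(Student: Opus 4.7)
The plan is to derive \eqref{eq:45} directly from the explicit formula \eqref{eq:44} as a diffusive-scaling limit. The choice of the observable in \eqref{eq:43}, placing it at the label $\lfloor tU+V^{-1}x\rfloor$, is designed precisely so that this label lies on the characteristic line through $x$, and hence the shift $(t-s)VU$ appearing inside $H$ in Theorem \ref{th:2} will cancel against the leading piece of $Vz$, where $z$ is the displacement between the two labels.

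First, translation invariance of $\cW^{\bar\xi}$ (asserted in Theorem \ref{th:2}) gives
\[
\mathbb E_{\bar\xi}[h_{x',t'}h_{y',s'}]-\mathbb E_{\bar\xi}[h_{x',t'}]\mathbb E_{\bar\xi}[h_{y',s'}]=a^{2}\bW_{z}(t',s')
\]
with label difference $z:=\lfloor t'U+V^{-1}x'\rfloor-\lfloor s'U+V^{-1}y'\rfloor=(t'-s')U+V^{-1}(x'-y')+O(1)$. When this $z$ is substituted into the definition of $H$ from \eqref{eq:44}, the $O(t'-s')$ contribution $V\cdot(t'-s')U$ inside $Vz$ is cancelled by the $(t'-s')VU$ shift, and one is left with $H=x'-y'+O(1)$.

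Next I would apply the rescaling $t'=\delta^{-1}t$, $s'=\delta^{-1}s$, $x'=\delta^{-1/2}x$, $y'=\delta^{-1/2}y$ together with the change of integration variable $a=\delta^{-1}b$ in \eqref{eq:44}: the measure $da/a$ is scale invariant, the interval $[1+(t'-s')/2,1+(t'+s')/2]$ rescales to $[\delta+(t-s)/2,\delta+(t+s)/2]$, and the Gaussian exponent satisfies $|H|^{2}/(4a)=|x-y|^{2}/(4b)+O(\delta^{1/2}/b)$, so dominated convergence yields
\[
\lim_{\delta\to 0}\frac{v}{4\pi w}\int_{1+(t'-s')/2}^{1+(t'+s')/2}\frac{e^{-|H|^{2}/(4a)}}{a}\,da=\frac{v}{4\pi w}\int_{(t-s)/2}^{(t+s)/2}\frac{e^{-|x-y|^{2}/(4b)}}{b}\,db.
\]
The error $j(t',s',z)$ vanishes because both $t'-s'=\delta^{-1}(t-s)$ and $|z|$ diverge, triggering the second bound in \eqref{eq:condizionij}. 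Multiplying by $a^{2}=4\pi w/(8v)$ (the very choice of constant in \eqref{eq:43}) produces the coefficient $1/8$ and matches \eqref{eq:45}. The identification with the additive $(2+1)$-dimensional stochastic heat equation \eqref{eq:53} is then a one-line comparison with the Fourier-side covariance formula quoted from \cite[(2.8)]{Hairer}.

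I do not anticipate any genuine obstacle: once Theorem \ref{th:2} is in hand, the corollary is an algebraic scaling argument. The only care required is in verifying that the $O(1)$ floor errors in the position label get absorbed into the $O(\delta^{1/2}/b)$ error in the exponent of the Gaussian, and that the short-distance cutoff ``$1+$'' in \eqref{eq:44} disappears after the change of variable $a=\delta^{-1}b$; both are immediate consequences of dominated convergence on the bounded integration window.
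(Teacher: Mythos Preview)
Your approach is correct and is exactly how the paper intends the corollary to be read off from \eqref{eq:44}; the paper gives no separate proof, simply asserting that \eqref{eq:45} follows from \eqref{eq:44}. One small slip: the displacement entering $\bW_{(\cdot)}(t',s')$ should be the \emph{second} label minus the \emph{first} (i.e.\ $-z$ in your notation), and it is with that sign that the $(t'-s')VU$ pieces genuinely cancel rather than add---since only $|H|^{2}$ enters the integrand this does not affect your final answer.
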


\begin{Remark}
  If the particle label $p$ is $d$-dimensional, $ d\ne 2$, and the
  matrix $A$ satisfies the analog the properties (1)--(5) of
  Proposition \ref{prop:prop} (with $[-\pi,\pi]^2$ replaced by
  $[-\pi,\pi]^d$), then one can check that Theorem \ref{th:2} still
  holds, with \eqref{eq:44} replaced by
\begin{eqnarray}
  \label{eq:dnon2}
   \mathbb W_{y}(t,s)=
 \frac {\pi v}{(2\pi)^d w}\int_{1+(t-s)/2}^{1+(t+s)/2}\frac{e^{-|H|^2/(4a)}}{a^{d/2}} da
+ j(t,s,y), \quad y\in \mathbb Z^d.
\end{eqnarray}
We leave it to an interested reader 
to derive the analogs of Corollaries \ref{cor1}--\ref{cor3} for $d\ne 2$.
\end{Remark}

\subsection{Stationary measure}

 We cannot expect that there is a stationary measure for $\{\xi_{p}\}_p$: since there is invariance by global shifts on $\mathbb R$ of all positions $\xi_p$, the inverse
covariance matrix will have a zero mode (like a Gaussian Free Field
not pinned to zero anywhere). However, the stationary measure on
gradients of $\xi$ will be well-defined. Not surprisingly, its scaling
limit (which requires letting $m\to\infty$) is the standard massless Gaussian Free Field
(see e.g. \cite{GFF} for definitions), up to an  affine
transformation of coordinates by the matrix $V$ (cf. \eqref{eq:varianza}).

\begin{Theorem}
\label{th:muinv}
Let $A$ satisfy the properties of Proposition \ref{prop:prop}.
There exists a unique  stationary measure $\mu$ for the gradients $(\xi_p-\xi_{p'})_{p,p'\in\cR_m}$. This measure  is Gaussian and
  its mean and covariances are given by 
  \begin{eqnarray}
\label{eq:nomedia}
    \mathbb E_\mu(\xi_p-\xi_{p'})=0 \quad \forall p,p'
  \end{eqnarray}
and
\begin{eqnarray}
  \label{eq:covar}
      {\rm Cov}_\mu\left[
(\xi_{y_1}-\xi_{y_2});(\xi_{y_3}-\xi_{y_4})\right]&=&-\frac
  v{m^2}\sum_{k\in \mathcal K_m}\frac{(e^{i
  k y_1}-e^{i k y_2})(e^{-i k y_3}-e^{-i k y_4})}{\hat R(k)}
\\&
\stackrel{m\to\infty}\to&
\label{eq:moreg}
-\frac{v}{(2\pi)^2}\int_{[-\pi,\pi]^2}d k
\frac{(e^{i
  k y_1}-e^{i k y_2})(e^{-i k y_3}-e^{-i k y_4})}{\hat R(k)}\\&\equiv&{\rm Cov}_{\mu_\infty}\left[
(\xi_{y_1}-\xi_{y_2});(\xi_{y_3}-\xi_{y_4})\right].
\end{eqnarray}
Moreover, one has
\begin{gather}
  \label{4punti}
{\rm Cov}_{\mu_\infty}[(\xi_{y_1}-\xi_{y_2});(\xi_{y_3}-\xi_{y_4})]=
\frac v{2\pi w}\log\frac{1+|V(y_1-y_4)||V(y_3-y_2)|}{1+|V(y_1-y_3)||V(y_2-y_4)|}+R_{y_1,y_2,y_3,y_4}
\end{gather}
where $V$ is the matrix in \eqref{eq:Vv} 
and\footnote{here, $|y_i-y_j|$ denotes the Euclidean distance between
  $y_i$ and $y_j$ on $\mathbb Z^2$ and not on the ``torus'' $\cR_m$ (recall that the limit $m\to\infty$ has already been taken).}
\begin{eqnarray}
  \label{eq:R4}
R_{y_1,y_2,y_3,y_4}=O(1/(1+
\min(|y_1-y_3|,|y_2-y_4|,|y_1-y_4|,|y_2-y_3|))).
\end{eqnarray}

From this we deduce a convergence to a massless GFF on $\mathbb R^2$, of covariance proportional to
\[
-\log |V(x-y)|,
\]
in the following sense: Let $\phi:\mathbb R^2\mapsto \mathbb R$ be a $C^\infty$, compactly supported function such that $\int_{\mathbb R^2}\phi(x)dx=0$. Then, the zero-average random function
  \begin{eqnarray}
    \label{eq:smooth}
 \xi_\phi:=   \delta^2\sum_{p}\phi(\delta p)(\xi_p-\xi_0)
  \end{eqnarray}
converges in law, in the limit $\lim_{\delta\to0}\lim_{m\to\infty}$, to
a centered Gaussian random variable of variance
\begin{eqnarray}
  \label{eq:varianza}
- \frac v{2\pi w} \int \phi(x)\phi(y)\log |V(x-y)|dx dy.
\end{eqnarray}
\end{Theorem}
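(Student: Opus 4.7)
\textbf{Proof plan for Theorem \ref{th:muinv}.}

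The plan is to exploit translation invariance and diagonalize in Fourier space, then carry out a careful saddle-point/change-of-variables analysis at the quadratic singularity of $1/\hat R(k)$ at $k=0$ to extract the logarithm.

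\emph{Step 1 (Fourier diagonalization and Gaussianity).} Write the SDE system \eqref{eq:2} in the basis $\{f_k\}_{k\in\mathcal K_m}$. Since $A$ is translation invariant (property (1) of Proposition \ref{prop:prop}), the transformed equations decouple:
\begin{equation*}
d\hat\xi_{k,t}=\hat A(k)\,\hat\xi_{k,t}\,dt+\sqrt v\,d\hat W_{k,t},\qquad k\in\mathcal K_m,
\end{equation*}
where the $\hat W_{k,t}$ are (complex) white noises with the usual constraint $\hat W_{-k,t}=\overline{\hat W_{k,t}}$. For each $k\ne0$ property (3) gives $\mathrm{Re}\,\hat A(k)=\hat R(k)/2<0$, so the mode is an Ornstein--Uhlenbeck process with unique stationary complex Gaussian law of variance $-v/\hat R(k)$. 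The zero mode $\hat\xi_{0,\cdot}$ is a drifted Brownian motion with no stationary law, which is precisely why there is no stationary measure for the $\xi_p$ themselves but there is one for gradients: any gradient $\xi_{y_1}-\xi_{y_2}$ equals $\sum_{k\in\mathcal K_m,\,k\ne0}\hat\xi_k(\overline{f_k(y_1)}-\overline{f_k(y_2)})$, which discards the zero mode.

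\emph{Step 2 (Finite and infinite volume covariances).} Summing the contributions from the independent stationary modes, and using $|f_k(y)|^2=1/m^2$, yields \eqref{eq:nomedia} and \eqref{eq:covar} directly. For \eqref{eq:moreg} I would verify that the integrand in \eqref{eq:covar} is dominated by an integrable function: near $k=0$ the numerator is $O(|k|^2)$ by Taylor expansion, and $\hat R(k)\sim\hat W(k)\asymp -|k|^2$ by property (4), so the ratio is bounded; away from $k=0$ the denominator is bounded away from zero by property (3) combined with periodicity and $C^\infty$-smoothness (property (5)). Dominated convergence upgrades the Riemann sum to the integral.

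\emph{Step 3 (Asymptotic identity \eqref{4punti}).} Expand the numerator in \eqref{eq:moreg} into four terms $e^{ik\cdot(y_i-y_j)}$ and write
\begin{equation*}
-\frac v{(2\pi)^2}\int_{[-\pi,\pi]^2}\frac{e^{ik\cdot z}-1}{\hat R(k)}\,dk=G(z)
\end{equation*}
so that the four-point covariance equals $-G(y_1-y_3)-G(y_2-y_4)+G(y_1-y_4)+G(y_2-y_3)$ (the $-1$'s cancel in any such alternating combination, so subtracting them does not change the four-point sum). Now split $1/\hat R(k)=1/\hat W(k)+[1/\hat R(k)-1/\hat W(k)]$; the bracket is smooth at $k=0$ (its leading behavior is $O(|k|^{-1})$ after using $\hat R-\hat W=O(|k|^3)$), so its contribution to $G(z)$ is a bounded $O(1/(1+|z|))$ remainder, which I would estimate by a single integration by parts using the $C^\infty$ regularity (property (5)). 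For the main piece $1/\hat W(k)$, the change of variables $k=V^Tm$ converts $\hat W(V^Tm)=-|m|^2$ and $k\cdot z=m\cdot Vz$, with Jacobian $|\det V|=1/w$ (from \eqref{eq:Vv} and \eqref{eq:38}). Extending the domain from $V^T[-\pi,\pi]^2$ to all of $\mathbb R^2$ introduces only an $O(1/(1+|z|))$ error by another integration by parts against the smooth cutoff. The remaining two-dimensional standard integral
\begin{equation*}
\int_{\mathbb R^2}\frac{e^{im\cdot w}-1}{|m|^2}\,dm=-2\pi\log|w|+\mathrm{const}
\end{equation*}
(understood via a symmetric large-$|m|$ cutoff) gives $G(z)=\frac{v}{2\pi w}\log|Vz|+O(1/(1+|z|))$, and summing the four terms cancels the additive constants and the $\log$'s rearrange into \eqref{4punti}, with the $1+\cdots$ factors absorbed into the remainder $R_{y_1,\dots,y_4}$ of \eqref{eq:R4}.

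\emph{Step 4 (GFF convergence).} Plug the asymptotic \eqref{4punti} into the variance of $\xi_\phi$ defined in \eqref{eq:smooth}. The $\xi_0$-subtraction cancels because $\int\phi=0$, and similarly the remainder $R$ contributes $o(1)$ as $\delta\to 0$ since it is summable against $\delta^4\sum_{p,p'}|\phi(\delta p)\phi(\delta p')|$. The leading logarithmic piece is a Riemann sum for $-\frac{v}{2\pi w}\int\phi(x)\phi(y)\log|V(x-y)|\,dx\,dy$, finite because $\phi$ has zero mean and compact support and $\log$ is locally integrable in $\mathbb R^2$. Gaussianity of $\xi_\phi$ is preserved under the limit since it is a linear functional of a jointly Gaussian family, yielding \eqref{eq:varianza}.

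\emph{Expected main obstacle.} The delicate point is Step 3: controlling the difference $1/\hat R-1/\hat W$ and the truncation from $V^T[-\pi,\pi]^2$ to $\mathbb R^2$ in a way that produces a remainder decaying like $1/\min(|y_i-y_j|)$ as in \eqref{eq:R4}, rather than merely a bounded remainder. This requires a careful integration-by-parts argument using the smoothness and non-vanishing of $\hat R$ off the origin and a uniform estimate on the cubic correction; the oscillatory factor $e^{ik\cdot(y_i-y_j)}$ must be leveraged fully to get decay in \emph{each} of the four distances $|y_i-y_j|$.
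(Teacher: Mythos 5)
Your proposal is correct and, for Steps 1, 2 and 4, follows the paper's own argument essentially verbatim: Fourier diagonalization into independent Ornstein--Uhlenbeck modes with stationary variance $-v/\hat R(k)$, discarding the zero mode to get stationarity of gradients, and a Riemann-sum argument for the GFF limit (the paper additionally introduces a reference point $p_0$ at distance of order $\delta^{-1}$ outside the support of $\phi(\delta\cdot)$ so as to reduce $\mathrm{Var}(\xi_\phi)$ to the four-point formula \eqref{4punti}; you will need some such device, since \eqref{4punti} is a statement about differences of differences). Where you genuinely diverge is Step 3. The paper keeps the four-point numerator intact, replaces $1/\hat R(k)$ by $e^{\hat W(k)}/\hat W(k)$ --- the damping factor $e^{\hat W(k)}$ being inserted precisely so that the integrand is absolutely integrable on all of $\mathbb R^2$ (Remark \ref{rem:ausi}) --- and then, after the change of variables $k=V^Tw$, evaluates everything in closed form via $e^{-|w|^2}/|w|^2=\int_1^\infty e^{-a|w|^2}\,da$ and incomplete Gamma functions, whose exponential decay gives \eqref{eq:R4}. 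You instead split into four two-point Green's functions $G(y_i-y_j)$ and invoke the cutoff-regularized identity for $\int(e^{im\cdot w}-1)|m|^{-2}dm$. This route can be made to work, but as written it has a loose end: your claim that extending the domain from $V^T[-\pi,\pi]^2$ to $\mathbb R^2$ costs only $O(1/(1+|z|))$ is false for each $G$ taken separately, because the ``$-1$'' part of the numerator contributes a logarithmically divergent (though $z$-independent) exterior integral; this cancels only in the alternating four-point combination, so you must either drop the $-1$'s before extending the domain (and then control the conditionally convergent oscillatory exterior piece by two integrations by parts) or adopt the paper's damping factor. There is also a harmless internal sign slip: with your stated definition of $G$, the covariance equals $+G_{13}+G_{24}-G_{14}-G_{23}$ and $G(z)\sim-\tfrac{v}{2\pi w}\log|Vz|$; your two sign reversals cancel, so \eqref{4punti} comes out correctly.
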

Remark that, given $\phi^{(i)},i=1,2$ satisfying the same assumptions as $\phi$ above, the limit covariance $\lim_{\delta\to0}\lim_{m\to\infty}\mathbb E_\mu(\xi_{\phi^{(1)}}\xi_{\phi^{(2)}})$ can be simply deduced via
\begin{eqnarray}
2  \mathbb E_\mu(\xi_{\phi^{(1)}}\xi_{\phi^{(2)}})=\mathbb E_\mu(\xi^2_{\phi^{(1)}})+\mathbb E_\mu(\xi_{\phi^{(2)}}^2)
-\mathbb E_\mu((\xi_{\phi^{(1)}-\phi^{(2)}})^2).
\end{eqnarray}

In particular, \eqref{4punti} gives
\begin{gather}
  \label{eq:39}
  \limsup_{|y|\to\infty}   \left|{\rm Var}_{\mu_\infty}[\xi_0-\xi_y]-
\frac v{\pi w}\log |V y|\right|<\infty
\end{gather}
(using  \eqref{eq:c-c+}, one can replace $Vy$ with $y$).

\section{Convergence to the SDEs: Proof of Theorem \ref{th:SDE}}
\label{sec:t1}

\subsection{A tightness estimate}

Let
\begin{eqnarray}
  \label{eq:G}
  \mathcal G=\left\{\sigma\in\Omega_{L,N;m_1,m_2}:\forall p, \left|B_p-\frac B\epsilon\right|\le \frac{\log (1/\epsilon)}{\sqrt\epsilon}, \left|D_p-\frac D\epsilon\right|\le \frac{\log (1/\epsilon)}{\sqrt\epsilon}
\right\}.
\end{eqnarray}
Note that the initial condition $\sigma_0$ is well inside $\mathcal G$, cf. \eqref{eq:xi}.
The crucial ingredient in the proof of Theorem \ref{th:SDE} is the following a-priori tightness estimate:
\begin{Lemma}
\label{lemma:apriori}
  Let $\partial \mathcal G\subset \mathcal G$ denote the inner boundary of $\mathcal G$ (the set
  of configurations from which the dynamics can exit $\mathcal G$ with a single update) and define the stopping time
  \begin{gather}
    \tau_{\mathcal G}=\inf\{t>0: \sigma(t)\in\partial \mathcal G\}.
  \end{gather}
Then,
\begin{gather}
\label{eq:taulargo}
\lim_{\epsilon\to0} \mathbb P_{\sigma_0}(\tau_{\mathcal G}\le \epsilon^{-2})=0.
\end{gather}
\end{Lemma}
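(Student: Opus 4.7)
The plan is a Lyapunov/moment bound that exploits the strict negativity of $\hat R(k)$ at $k\ne 0$ (property (3) of Proposition~\ref{prop:prop}), coupled with a union bound in particles and time.

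I would first translate the exit event into a fluctuation bound on the variables $\tilde\eta_p(t):=\sqrt\epsilon(x_p(t)-X_p-v t)$ in the original microscopic time $t$. A useful initial observation is that for $\sigma\in\mathcal G$ the interlacing identity $F_{p_2}=D_{p_1}-B_p$, obtained by summing oriented horizontal distances around the triangle of vertices $p,p_2,p_1$, together with the bounds defining $\mathcal G$ forces $F_q=C/\epsilon+O(\log(1/\epsilon)/\sqrt\epsilon)>0$ for every $q$ and every small enough $\epsilon$; hence $V^+_q=\{q\}$ throughout $\mathcal G$, so a clock ring at $q$ displaces only $q$. Taylor-expanding the rate \eqref{eq:6prime} about the crystalline values $(B/\epsilon,C/\epsilon,D/\epsilon)$ then gives, uniformly on $\mathcal G$,
\[
r_q(\sigma)=v+\sqrt\epsilon\,(A\tilde\eta)_q+O(\epsilon\log^2(1/\epsilon)),
\]
with $A$ as in \eqref{eq:A}; consequently the drift of $\tilde\eta_q$ on $\{t<\tau_\mathcal G\}$ is $\epsilon\,(A\tilde\eta)_q+O(\epsilon^{3/2}\log^2(1/\epsilon))$ and the noise consists of jumps of size $\sqrt\epsilon$ occurring at rate $O(1)$. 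Since $B_p-B/\epsilon$ and $D_p-D/\epsilon$ equal $(\tilde\eta_{p_2}-\tilde\eta_p)/\sqrt\epsilon$ and $(\tilde\eta_p-\tilde\eta_{p_4})/\sqrt\epsilon$ up to $O(1)$, the event $\{\tau_\mathcal G\le\epsilon^{-2}\}$ coincides, up to $O(\sqrt\epsilon)$ adjustments of the threshold, with the event that $\max_{p,\,t\le\epsilon^{-2}}|\tilde\eta_p(t)-\tilde\eta_{p'}(t)|\ge\log(1/\epsilon)$ for some neighbouring pair $(p,p')$.

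Next I would perform a Lyapunov estimate. Put $\langle\tilde\eta\rangle:=m^{-2}\sum_q\tilde\eta_q$ and $\zeta_p:=\tilde\eta_p-\langle\tilde\eta\rangle$ (legitimate because only gradients enter and $A\mathbf 1=0$ by property (2)), and consider $F:=\|\zeta\|^2=\sum_{k\in\mathcal K_m\setminus\{0\}}|\hat\zeta_k|^2$. By Parseval and property (3),
\[
2\sum_p\zeta_p(A\zeta)_p=\sum_{k\ne 0}\hat R(k)\,|\hat\zeta_k|^2\le-c_m\,F,\qquad c_m:=-\max_{k\in\mathcal K_m\setminus\{0\}}\hat R(k)>0.
\]
Adding the $O(\epsilon)$ quadratic-variation contribution from the jumps yields $\mathcal L F\le-c_m\epsilon F+C\epsilon$ on $\{t<\tau_\mathcal G\}$, so $\mathbb E F(t)=O(1)$ uniformly in $t$. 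The analogous computation for $F_n:=\sum_p\zeta_p^{2n}$ produces a recursion giving, inductively in $n$, $\mathbb E F_n(t\wedge\tau_\mathcal G)\le(Kn)^n$ for some $K=K(m,v)$ independent of $n$ and $t$.

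I would conclude by Markov's inequality at the optimal exponent $n\asymp\log^2(1/\epsilon)$, which produces the Gaussian-type tail $\mathbb P(|\zeta_p(t\wedge\tau_\mathcal G)|\ge\tfrac13\log(1/\epsilon))\le\exp(-c\log^2(1/\epsilon))$. Discretising $[0,\epsilon^{-2}]$ on a unit-spaced grid (between two consecutive grid points $\tilde\eta_p$ varies by at most $\sqrt\epsilon$ times a Poisson$(O(1))$ random variable, hence by less than $1$ with super-exponential probability), a union bound over the $m^2$ particles of $\cR_m$ and the $\epsilon^{-2}$ grid points gives
\[
\mathbb P(\tau_\mathcal G\le\epsilon^{-2})\le m^2\,\epsilon^{-2}\,e^{-c\log^2(1/\epsilon)}+o(1)\xrightarrow[\epsilon\to 0]{}0,
\]
as desired. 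The hardest part will be the inductive propagation of the moments $F_n$: the Taylor-remainder drift $O(\epsilon^{3/2}\log^2(1/\epsilon))$ has to be shown to remain dominated by the coercive term $-c_m\epsilon F_n$ coming from property (3), and the scale $\log(1/\epsilon)/\sqrt\epsilon$ in the definition \eqref{eq:G} of $\mathcal G$ is calibrated precisely so that this is the case while the resulting Gaussian-type tail still beats the polynomial union-bound factor.
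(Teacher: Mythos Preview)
Your route differs substantially from the paper's, which does no dynamical Lyapunov analysis at all. The paper instead exploits the explicit stationary law $\pi$ of Proposition~\ref{Prop1} via the elementary bound $P_t(x,y)\le\pi(y)/\pi(x)$, valid for any Markov chain. An expansion of $(q;q)_a$ (Lemma~\ref{lemma:qq}) shows that on $\mathcal G$ one has $\pi(\sigma)=Z^{-1}\exp\bigl(v\sum_k\hat R(k)|\hat\eta_k|^2+o(1)\bigr)$; hence $\pi(\sigma_0)\ge Z^{-1}e^{-K_1}$ while $\pi(\sigma)\le Z^{-1}e^{-K_2(\log\epsilon)^2}$ for $\sigma\in\partial\mathcal G$ (some gradient is of size $\log(1/\epsilon)$, and $\hat R<0$ off zero). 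Combined with $|\partial\mathcal G|=O(\epsilon^{-K_3})$ and a short sojourn-time argument, this yields \eqref{eq:taulargo} with no moment recursion whatsoever. Your scheme, by contrast, would work even without knowing the invariant measure, which is a genuine conceptual advantage---if it can be made to close.

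The proposal has a concrete gap precisely at the step you flag as hardest. Property~(3) is an $L^2$ statement: $2\langle\zeta,A\zeta\rangle=\sum_{k\ne0}\hat R(k)|\hat\zeta_k|^2\le-c_mF$. The drift of your $F_n=\sum_p\zeta_p^{2n}$ contains instead the term $2n\epsilon\sum_q(A\zeta)_q\,\zeta_q^{2n-1}$, which has \emph{no} definite sign for this non-symmetric $A$ with off-diagonal entries of both signs (in particular $A_{0,(-1,0)}<0$); $A$ is not the generator of an $L^p$-contractive semigroup for $p>2$, so the ``analogous computation'' does not go through. Without a coercive term, over the horizon $\epsilon^{-2}$ the recursion accumulates an extra factor $\epsilon^{-1}$ at each level and the claimed uniform-in-$t$ bound $\mathbb E F_n\le(Kn)^n$ fails by powers of $\epsilon^{-1}$, destroying the Markov-inequality conclusion. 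A repair is to use $G_n:=F^n$ instead of $F_n$: its leading drift $nF^{n-1}\mathcal L F$ inherits coercivity from the $n=1$ case, and then $\mathbb P(\max_p|\zeta_p|\ge L)\le\mathbb P(F\ge L^2)\le\mathbb E G_n/L^{2n}$ closes the argument---but that is not what you wrote, and the subleading jump contributions $\binom nj\Delta_q^j$ for $n\asymp\log^2(1/\epsilon)$ still require careful bookkeeping that the paper's stationary-measure argument sidesteps entirely.
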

The time $\epsilon^{-2}$ could be replaced by any
$\epsilon^{-a}$. What we need is that $a>1$, so that this time is much
larger than $1/\epsilon$, the time-scale of convergence to the SDEs.

\begin{proof}[Proof of Lemma \ref{lemma:apriori}]
  Let us start with the following:
  \begin{Lemma}
\label{lemma:qq}
Let $a=\epsilon^{-1}b+X$ with  $b>0$ and  $\sqrt \epsilon |X|\ll \epsilon^{-1/10}$.
The following asymptotic expansion holds as $\epsilon\to0$:
\begin{gather}
  (q;q)_a=\exp\left[
\frac1\epsilon\sum_{n\ge1}\frac1{n^2}e^{-bn}-\frac12\sum_{n\ge1}\frac1ne^{-bn}\right.
\\
\left.- X\sum_{n\ge1}\frac1n e^{-bn}+\epsilon\frac{X^2}2\frac{e^{-b}}{1-e^{-b}}+C(\epsilon)+o(1)
\right]
\end{gather}
with $C(\epsilon)$ independent of $b $ and $X$.
  \end{Lemma}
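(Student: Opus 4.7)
The plan is to take logarithms and use $\log(1-q^k)=-\sum_{n\ge 1}q^{nk}/n$. Swapping the order of summation and summing the geometric series in $k$ yields
\[
\log(q;q)_a=-\sum_{n\ge 1}\frac{1-e^{-n(b+\epsilon X)}}{n(e^{n\epsilon}-1)}.
\]
The numerator $1$ contributes the $b,X$-independent quantity $-\sum_{n\ge1}[n(e^{n\epsilon}-1)]^{-1}$, which I would absorb into $C(\epsilon)$. What remains is to asymptotically expand, as $\epsilon\to 0$,
\[
F(X):=\sum_{n\ge 1}\frac{e^{-n(b+\epsilon X)}}{n(e^{n\epsilon}-1)}.
\]

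Next I would Taylor expand $F(X)$ around $X=0$ with exact remainder, $F(X)=F(0)+XF'(0)+\tfrac{X^2}{2}F''(0)+\tfrac{X^3}{6}F'''(X_*)$ for some $X_*$ between $0$ and $X$. Differentiating under the sum gives $F^{(k)}(X)=(-\epsilon)^k\sum_{n\ge1}n^{k-1}e^{-n(b+\epsilon X)}/(e^{n\epsilon}-1)$, so every derivative in $X$ carries an extra factor of $\epsilon$. The hypothesis $\sqrt\epsilon|X|\ll\epsilon^{-1/10}$ gives $|\epsilon X_*|\ll\epsilon^{2/5}$, so for small enough $\epsilon$ one has $e^{-n(b+\epsilon X_*)}\le e^{-nb/2}$ uniformly in $n$. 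Combined with the elementary bound $1/(e^{n\epsilon}-1)\le C\max(1/(n\epsilon),1)$, this yields $|F'''(X_*)|=O(\epsilon^2)$, so the cubic remainder is $O(\epsilon^2 X^3)=o(1)$.

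The third step is to compute $F(0)$, $F'(0)$, $F''(0)$ by splitting each sum at $n=\lfloor 1/\epsilon\rfloor$. The tail $n>1/\epsilon$ is super-exponentially small in $1/\epsilon$ because of the factor $e^{-nb}$. On the head, where $n\epsilon\le 1$, I would use the Bernoulli expansion
\[
\frac{1}{e^y-1}=\frac{1}{y}-\frac12+\frac{y}{12}+O(y^3),
\]
at $y=n\epsilon$, and recompose full sums over $n\ge 1$ using $\sum_{n\ge 1}e^{-nb}/n^2=\mathrm{Li}_2(e^{-b})$, $\sum_{n\ge 1}e^{-nb}/n=-\log(1-e^{-b})$, and $\sum_{n\ge 1}e^{-nb}=e^{-b}/(1-e^{-b})$. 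This gives
\[
F(0)=\tfrac1\epsilon \mathrm{Li}_2(e^{-b})+\tfrac12\log(1-e^{-b})+o(1),
\]
\[
F'(0)=\log(1-e^{-b})+o(1),\qquad F''(0)=\epsilon\,\frac{e^{-b}}{1-e^{-b}}+o(\epsilon).
\]
Substituting these into the Taylor expansion and noting that the cross errors $XF'(0)-X\log(1-e^{-b})=O(\epsilon X)$ and $\tfrac{X^2}{2}F''(0)-\tfrac{\epsilon X^2}{2}e^{-b}/(1-e^{-b})=O(\epsilon^2 X^2)$ are both $o(1)$ under the hypothesis produces the claimed formula.

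The main obstacle is the control of the Taylor remainder in $X$: since $X$ may be as large as $\epsilon^{-3/5}$, one cannot expand $e^{-n\epsilon X}$ termwise in $n$ with uniform error. The point of working at the level of the full function $F$ is to exploit the feature that every differentiation in $X$ pulls down a factor of $\epsilon$, so the potentially large $|X|^3$ in the cubic remainder is tamed by the prefactor $\epsilon^2$; the quantitative assumption $\sqrt{\epsilon}|X|\ll \epsilon^{-1/10}$ is exactly what makes $\epsilon^2 X^3=o(1)$, and this is the reason for that specific power in the statement.
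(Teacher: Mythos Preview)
Your proof is correct and follows essentially the same strategy as the paper: write $\log(q;q)_a=-\sum_{n\ge1}q^n(1-q^{na})/[n(1-q^n)]$, absorb the $(b,X)$-independent piece into $C(\epsilon)$, expand $1/(e^{n\epsilon}-1)=1/(n\epsilon)-1/2+O(n\epsilon)$ for the leading terms, and Taylor expand in the small quantity $\epsilon X$ to second order with a cubic remainder controlled by $\epsilon^2|X|^3=o(1)$. The only organizational difference is that the paper handles the $X$-dependent part by splitting the sum at $n=\epsilon^{-1/2+\delta}$ (so that $|n\epsilon X|=o(1)$ on the head, allowing termwise expansion of $e^{-n\epsilon X}$), whereas you Taylor expand $F(X)$ globally with a Lagrange remainder and bound $F'''$ directly; this avoids the choice of split point and is a touch cleaner, but the two arguments are equivalent in content.
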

See Appendix \ref{app:qq} for the proof.

\begin{Lemma}
\label{th:locmax}
If $\sigma\in{\mathcal
  G}$ and if as in \eqref{eq:xi} we let
$\eta_p:=\sqrt\epsilon(x_p-\epsilon^{-1}P_p)$, then
\begin{gather}
  \label{eq:11}
  \pi(\sigma)=\frac1Z \exp\left(v\sum_{k\in\mathcal K_m} \hat R(k)|\hat {\eta_k}|^2+o(1)\right)
\end{gather}
where $\pi(\cdot)$ is defined in \eqref{eq:3prime} and, as in \eqref{eq:hatxi}, we set
\begin{eqnarray}
  \label{eq:hateta}
  \hat {\eta_k}=\sum_{p\in\cR_m}\eta_p f_k(p),\quad \eta_p=\sum_{k\in\mathcal K_m}\hat \eta_k\overline{f_k(p)}.
\end{eqnarray}
We recall from Proposition \ref{prop:prop} that $\hat R(k)\le 0 $ and vanishes only for $k=0$.
\end{Lemma}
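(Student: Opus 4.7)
The approach is to expand $\log\pi(\sigma)$ factor by factor via Lemma \ref{lemma:qq} and reorganize what survives in Fourier coordinates. For $\sigma\in\mathcal G$, one has
\[
A_p=\tfrac{D}{\epsilon}-1+\tfrac{1}{\sqrt\epsilon}(\eta_{p_1}-\eta_p),\qquad B_p=\tfrac{B}{\epsilon}-1+\tfrac{1}{\sqrt\epsilon}(\eta_{p_2}-\eta_p),\qquad C_p=\tfrac{C}{\epsilon}+\tfrac{1}{\sqrt\epsilon}(\eta_p-\eta_{p_3}),
\]
so each quantity has the form $\epsilon^{-1}b+X$ with $b\in\{B,C,D\}$ and $|X|=O(\log(1/\epsilon))$, well within the hypotheses of Lemma \ref{lemma:qq}. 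Applying that Lemma to each of the $3m^2$ factors in \eqref{eq:3prime} and taking logarithms, each factor contributes: a divergent term of order $1/\epsilon$ depending only on $b$; an $O(1)$ constant from $-\tfrac12 S_1(b)$, $C(\epsilon)$, and from evaluating $-X S_1(b)$ on the $\eta$-independent part of $X$ (here $S_1(b):=-\log(1-e^{-b})$); a linear-in-$\eta$ contribution $\mp\epsilon^{-1/2}(\eta_{p_j}-\eta_p)S_1(b)$; and the quadratic contribution $\tfrac{\epsilon X^2}{2}\alpha_b$ with $\alpha_b:=e^{-b}/(1-e^{-b})$.

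All configuration-independent pieces are absorbed into $\log Z$. The linear-in-$\eta$ contributions vanish after summation over $p\in\cR_m$ because $\sum_p(\eta_{p+r}-\eta_p)=0$ on the quotient torus. Expanding $\tfrac{\epsilon X^2}{2}\alpha_b$ produces the leading quadratic $\tfrac12(\eta_{p_j}-\eta_p)^2\alpha_b$ together with a cross term of order $\sqrt\epsilon$ linear in $\eta$-differences (hence also vanishing on summation) and an $O(\epsilon)$ constant. What survives at order $1$ is
\[
Q(\eta)=\tfrac12\sum_{p\in\cR_m}\bigl[\alpha_D(\eta_{p_1}-\eta_p)^2-\alpha_B(\eta_{p_2}-\eta_p)^2-\alpha_C(\eta_p-\eta_{p_3})^2\bigr],
\]
the minus signs arising because $(q;q)_{B_p}$ and $(q;q)_{C_p}$ sit in the denominator of \eqref{eq:3prime}.

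Passing to Fourier via the identity $\sum_p(\eta_{p+r}-\eta_p)^2=2\sum_{k\in\mathcal K_m}|\hat\eta_k|^2(1-\cos(r\cdot k))$, applied with $r=(1,0),(1,-1),(0,1)$, one obtains $Q(\eta)=\sum_k|\hat\eta_k|^2\Phi(k)$ with
\[
\Phi(k):=\alpha_D(1-\cos k_1)-\alpha_B(1-\cos(k_1-k_2))-\alpha_C(1-\cos k_2).
\]
Since $\hat R(0)=0$, one has $\hat R(k)=-2[A_{0,(-1,0)}(1-\cos k_1)+A_{0,(1,-1)}(1-\cos(k_1-k_2))+A_{0,(0,-1)}(1-\cos k_2)]$. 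An elementary algebraic check with the explicit formulas \eqref{eq:A} and \eqref{eq:1} shows that the three ratios $\alpha_D/(-2A_{0,(-1,0)})$, $\alpha_B/(2A_{0,(1,-1)})$ and $\alpha_C/(2A_{0,(0,-1)})$ all coincide; a term-by-term comparison then identifies $\Phi(k)$ with the scalar multiple of $\hat R(k)$ claimed in \eqref{eq:11}.

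The main technical point is ensuring that the $o(1)$ remainders of Lemma \ref{lemma:qq} are uniform in $X$ on the range permitted by $\mathcal G$, so that summing over $3m^2$ factors still yields $o(1)$. Since $m$, $m_2$, $\ell$ are held fixed at this stage of the limit and $|X|=O(\log(1/\epsilon))$ is comfortably inside the Lemma's regime, this is routine. The bound $\Phi(k)\le0$ with equality only at $k=0$ then follows from Proposition \ref{prop:prop}(3), confirming that crystalline configurations (those with $\hat\eta_k=0$ for $k\neq0$) are local maxima of $\pi$.
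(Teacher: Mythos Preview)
Your proof is correct and follows essentially the same route as the paper: expand each $q$-Pochhammer via Lemma~\ref{lemma:qq}, absorb the $\eta$-independent pieces into $Z$, note that the linear-in-$\eta$ terms telescope to zero on $\cR_m$, identify the surviving quadratic form, and then diagonalize it in Fourier coordinates to match it with a scalar multiple of $\hat R(k)$. Your verification of the final identity by checking that the three ratios $\alpha_D/(-2A_{0,(-1,0)})$, $\alpha_B/(2A_{0,(1,-1)})$, $\alpha_C/(2A_{0,(0,-1)})$ coincide is exactly the ``one checks that $\hat Q(k)=v\hat R(k)$'' step in the paper, made explicit.
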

\begin{proof}[Proof of Lemma \ref{th:locmax}]
  From Lemma \ref{lemma:qq} we see that
  \begin{eqnarray}
    \label{eq:46}
     \pi(\sigma)=\frac1Z \exp\left(-\frac12 (\eta,Q \eta)+o(1)\right)
  \end{eqnarray}
with $Z$ a normalization constant independent of $\sigma$ and
\begin{gather}
  - (\eta, Q \eta)=\sum_p
  (\eta_p-\eta_{p+(1,0)})^2\frac{e^{-D}}{1-e^{-D}}\\-
\sum_p (\eta_p-\eta_{p+(1,-1)})^2\frac{e^{-B}}{1-e^{-B}}-
\sum_p (\eta_p-\eta_{p+(0,-1)})^2\frac{e^{-C}}{1-e^{-C}}.
\end{gather}
This can be rewritten in Fourier space as
\begin{gather}
  \label{eq:Fourier}
  -\frac12(\eta, Q \eta)=\sum_k |\hat \eta_k|^2\hat Q(k)\\
\hat Q(k)=
\frac{e^{-D}}{1-e^{-D}}(1-\cos(k_1))-\frac{e^{-B}}{1-e^{-B}}(1-\cos(k_1-k_2))
\\-\frac{e^{-C}}{1-e^{-C}}(1-\cos(k_2)).
\end{gather}
Here, we use for instance that
\begin{gather}
  \sum_p(\eta_p-\eta_{p+(1,0)})^2=\sum_p\sum_{k,k'\in\mathcal K_m}\hat \eta_k\hat \eta_{-k'}\overline{f_k(p)}f_{k'}(p)(1-e^{i k_1})(1-e^{-i k'_1})\\
=\sum_{k\in\mathcal K_m}|\hat \eta_k|^2(2-2\cos(k_1))
\end{gather}
where we used orthonormality of $\{f_k(\cdot)\}_{k\in\mathcal K_m}$ and the second of \eqref{eq:eta-eta}.
One checks that
\begin{eqnarray}
\label{eq:QAR}
  \hat Q(k)=v (\hat A(k)+\hat A(-k))=v\hat R(k)
\end{eqnarray}
and the proof is concluded.
\end{proof}

Given any Markov chain with stationary measure $\pi$, generator $\mathcal L$  and two states
$x,y$, we have for any $t>0$
\begin{eqnarray}
  \label{eq:5}
P_t(x,y)\le \frac{\pi(y)}{\pi(x)}.
\end{eqnarray}
To see this, write
\begin{eqnarray}
  \label{eq:7}
  P_t(x,y)=[\delta_x e^{t\mathcal L}](y)\le \left[\frac{\pi}{\pi(x)}e^{t\mathcal L}\right](y)=\frac{\pi(y)}{\pi(x)}
\end{eqnarray}
where we used $\pi(\cdot)/\pi(x)\ge \delta_x(\cdot)$.
For the initial configuration, one has from \eqref{eq:46}
\begin{eqnarray}
  \label{eq:8}
  \pi(\sigma_0)\ge \frac1Z e^{-K_1}
\end{eqnarray}
for some finite constant $K_1$ depending on $\bar\xi$ (and, typically, of order $m^2$).
If instead $\sigma\in\partial \mathcal G$, then one has
\begin{gather}
\label{eq:WR}
  \pi(\sigma)\le \frac1Z e^{-K_2(\log \epsilon)^2}
\end{gather}
for some strictly positive $K_2$ (this is proven below).
Finally, observe that
\begin{eqnarray}
  \label{eq:ng}
  |\partial \mathcal G| = O(\epsilon^{-K_3})
\end{eqnarray}
 for some constant
$K_3$ depending on $m$. This is trivial: indeed, the total number of configurations is bounded by $(\ell/\epsilon)^{m^2}$ (recall the definition of the model,  the fact that $\ell$
is independent of $\epsilon$ and that there are $m^2$ particles).
Using \eqref{eq:5}, \eqref{eq:8}, \eqref{eq:WR} and \eqref{eq:ng}, we deduce
\begin{eqnarray}
  \label{eq:PGc}
  \mathbb P(\sigma(t)\in\partial\mathcal G)\le \exp(-K_4(\log \epsilon)^2).
\end{eqnarray}
As a consequence,
\begin{eqnarray}
  \label{eq:34}
  \mathbb E\left[\int_0^{\epsilon^{-2}+1}1_{\{\sigma(t)\in\partial\mathcal
  G\}}\,dt\right]\le (\epsilon^{-2}+1) \exp(-K_4(\log \epsilon)^2).
\end{eqnarray}
Next observe that, if $\tau_{\mathcal G}<\epsilon^{-2}$, then there exists a
probability at least $\delta>0$ independent of $\epsilon$ that the total time spent in  $\partial\mathcal
  G$ up to time $\epsilon^{-2}+1$ is at least $\delta\times\epsilon$. This is simply
  because the maximal transition rate of the Markov chain is of order
  $\epsilon^{-1}$
(this is the case when $C_p$ in \eqref{eq:6prime} is of order $1$).
In conclusion,
\begin{eqnarray}
  \label{eq:35}
   \mathbb E\left[\left.\int_0^{\epsilon^{-2}+1}1_{\sigma(t)\in\partial\mathcal
  G}dt\right|\tau_{\mathcal G}\le \epsilon^{-2}\right]\ge \delta^2\epsilon.
\end{eqnarray}
Together with \eqref{eq:34} we deduce
\begin{eqnarray}
  \label{eq:36}
  \mathbb P(\tau_{\mathcal G}\le \epsilon^{-2})\le \frac1{\delta^2\epsilon}\mathbb E\left[\int_0^{\epsilon^{-2}+1}1_{\{\sigma(t)\in\partial\mathcal
  G\}}\,dt\right]\le \exp(-K_5(\log \epsilon)^2)
\end{eqnarray}
and
\eqref{eq:taulargo} is proven.

It remains to prove \eqref{eq:WR}.
Recall  that  $(\hat A(k)+\hat A(-k))$ is negative for
every $k\in\mathcal K_m$ and vanishes only for $k=0$, so that
\begin{eqnarray}
  \label{eq:29}
  -\frac12(\eta, Q \eta)\le -\delta\sum_{k\in\mathcal K_m,k\ne0} |\hat \eta_k|^2
\end{eqnarray}
for some positive $\delta$ that depends only on the number of
particles $m^2$. 

If $\sigma\in\partial\mathcal G$
 we must have either
$|\eta_p-\eta_{p+(1,0)}|\ge (1/2)\log(1/\epsilon)$ or
$|\eta_p-\eta_{p+(1,-1)}|\ge (1/2)\log(1/\epsilon)$
for some $p$. Assume to fix ideas that the former is the case. Then, from \eqref{eq:hateta} (and writing $\sum_k$ instead of $\sum_{k\in\mathcal K_m}$)
\begin{gather}
  \label{eq:33}
  \frac12 \log(1/\epsilon)\le |\eta_p-\eta_{p+(1,0)}|=
|\sum_{k}\hat \eta_k \overline{ f_k(p)}(e^{i k_1}-1)|\\=
|\sum_{k\ne0}\hat \eta_k \overline{f_k(p)}(e^{i k_1}-1)|\le2
\sqrt{\sum_{k\ne0}|\hat \eta_k |^2}
\sqrt{\sum_k|f_k(p)|^2}= 2 \sqrt{\sum_{k\ne0}|\hat \eta_k |^2}.
\end{gather}
Then, \eqref{eq:WR} immediately follows.
\end{proof}

\subsection{Proof of Theorem \ref{th:SDE}}

Let $U(t)=\{U_p(t)\}_{p\in\cR_m}$ be defined as
\begin{eqnarray}
  \label{eq:Bp}
  U_p(t)=\frac1{\sqrt\epsilon}\int_0^t\left[-v+ r_p(\sigma(s/\epsilon)\right]ds
\end{eqnarray}
where $r_p(\sigma)$ is the rate at which particle $p$ jumps $+1$ to the right in the configuration $\sigma$
(this includes the event that it jumps because it is pushed by another
particle, i.e. because clock of particle $p'$ rings and $p\in V^+_{p'}$).
Note that $\{M_p(t)\}_t\equiv \{\eta_{p,t}-U_p(t)\}_t$ is a martingale,
since we have
\begin{eqnarray}
  \label{eq:29bis}
  \frac d{dt}U_p(t)=\left.\partial_s\mathbb E[\eta_{p,t+s}|\{\eta_{p',t}\}_{p'}]\right|_{s=0^+}.
\end{eqnarray}
Next, define $V(t)=\{V_{p,p'}(t)\}_{p,p'}$ as solution to
\begin{eqnarray}
  \label{eq:Vpq}
\frac d{dt}  V_{p,p'}(t)=\left.\partial_s\mathbb E[M_{p,t+s}M_{p',t+s}|\{\eta_{r,t}\}_r]\right|_{s=0^+}\quad V_{p,p'}(0)=0,
\end{eqnarray}
so that $\{M_p(t)M_{p'}(t)-V_{p,p'}(t)\}_t$ is again a martingale for every $(p,p')$.

We will apply  \cite[Theorem 4.1, Chapter 7]{cf:EK}, that gives a set of sufficient conditions on the processes $\eta, U$ and $V$ that imply
that $\{\eta_{p,t}\}_{t\ge0,p\in\cR_m}$ converges weakly to the solution of \eqref{eq:2}\footnote{A warning on notations: in \cite{cf:EK}, $\eta$ is called $X$, $U$ is called $B$ and $V$ is called $A$}.
In particular, conditions (4.3)--(4.5) in \cite{cf:EK} are trivial.
For (4.6), we have to check that, for any $T>0$,
\begin{eqnarray}
  \label{eq:4.6}
  \sup_{t\le T}\left|
U_p(t)-\int_0^t\sum_{p'} A_{p,p'}\eta_{p',s}ds
\right|\stackrel {\mathbb P} \to0
\end{eqnarray}
as $\epsilon\to0$.
On the event $\{\tau_{\mathcal G}>\epsilon^{-2}\}$, whose probability tends to $1$ as $\epsilon\to0$ thanks to Lemma \ref{lemma:apriori},
  no particle can push any other in configuration $\sigma(s/\epsilon)$
  for any $s/\epsilon\le \epsilon^{-2}$ (particles are far away from each
  other and all the families $V^+_p$ defined just before
  \eqref{eq:6prime}) include only the particle
  $p$ itself).
Then, $r_p(\sigma(s/\epsilon))$ equals the r.h.s. of \eqref{eq:6prime}
for any $s\le T$.
We have
\begin{gather}
\label{eq:circav}
  r_p(\sigma(s/\epsilon))=\frac{(1-e^{-B-\sqrt\epsilon(\eta_{p+(1,-1),s}-\eta_{p,s})})
(1-e^{-D-\sqrt\epsilon(\eta_{p,s}-\eta_{p-(1,0),s})-\epsilon})}
{1-e^{-C-\sqrt\epsilon(\eta_{p,s}-\eta_{p-(0,1),s})-\epsilon}}\\=
v+\sqrt\epsilon\left[e^{-B}\frac{1-e^{-D}}{1-e^{-C}}(\eta_{p+(1,-1),s}-\eta_{p,s})
+e^{-D}\frac{1-e^{-B}}{1-e^{-C}}(\eta_{p,s/\epsilon}-\eta_{p-(1,0),s})\right.\\
\left.-e^{-C}\frac{(1-e^{-B})(1-e^{-D})}{(1-e^{-C})^2}(\eta_{p,s}-\eta_{p-(0,1),s})
\right]+O(\epsilon\log(1/\epsilon))\\=
v+\sqrt\epsilon\sum_{p'} A_{p,p'}\eta_{p',s}+O(\epsilon\log(1/\epsilon))
\end{gather}
where $O(\epsilon\log(1/\epsilon))$ is uniform on $s\le T$.
We used that, on $\mathcal G$, $|\eta_{p-b}-\eta_p|\le \log(1/\epsilon)$, $b\in\{(-1,1),(1,0),(0,1)\}$.
Plugging this into \eqref{eq:Bp}  we get \eqref{eq:4.6}.

Finally, condition (4.7) in \cite[Theorem 4.1, Chapter 7]{cf:EK} amounts in our case to requiring that
\begin{eqnarray}
  \label{eq:4.7}
   \sup_{t\le T}\left|V_{p,p'}(t)-vt \delta_{p,p'}\right|\stackrel {\mathbb P} \to0.
\end{eqnarray}
Indeed,
one has (using that $\{M_{p,t}\}_{t\ge0}$ is a martingale)
\begin{gather}
  \frac d{dt} V_{p,p'}(t)= \partial_s\mathbb E\left[
N_{p,t+s}N_{p',t+s}
|\{\eta_{r,t}\}_r
\right]|_{s=0^+}
\end{gather}
where
\[
N_{p,t+s}=\sqrt \epsilon x_p(\epsilon^{-1}(t+s))-\frac1{\sqrt \epsilon}\int_0^{t+s}r_p(\sigma(\epsilon^{-1} u))du.
\]
Note that, by definition of the jump rate $r_p(\sigma)$,
\begin{gather}
  \partial_s\mathbb E\left[\left. x_p(\epsilon^{-1}(t+s))\int_0^{t+s}r_{p'}(\sigma(\epsilon^{-1} u))du\right|\{\eta_{r,t}\}_r\right]_{s=0^+}\\=\epsilon^{-1}r_p(\epsilon^{-1}t)\int_0^tr_{p'}(\sigma(\epsilon^{-1}u)du+x_p(\sigma(\epsilon^{-1}t)r_{p'}(\sigma(\epsilon^{-1}t))
.
\end{gather}
Therefore, one sees that
\begin{gather}
  \label{eq:30}
 \frac d{dt} V_{p,p'}(t)=-x_p( t/\epsilon)r_{p'}(\sigma( t/\epsilon))
-x_{p'}(t/\epsilon)r_p(\sigma(t/\epsilon))\\
+\epsilon\left.\partial_s\mathbb
  E[x_p((t+s)/\epsilon)x_{p'}((t+s)/\epsilon)|\{\eta_{r,t}\}_{r}]\right|_{s=0^+}.
\end{gather}
If $p=p'$, this gives
\begin{gather}
  \label{eq:31}
  \frac d{dt} V_{p,p'}(t)=-2x_p( t/\epsilon)r_p(\sigma( t/\epsilon))
+r_p(\sigma( t/\epsilon))(1+2x_p(t/\epsilon))\\=r_p(\sigma(t/\epsilon))
\end{gather}
where in the first step we used the fact that when $x_p$ jumps $+1$ (which happens with rate $r_p$), $x_p^2$ increases by $(x_p+1)^2-x_p^2=1+2x_p$.
Recall that, on the event $\{\tau_{\mathcal G}\ge \epsilon^{-2}\}$, we have
$r_p(\sigma(t/\epsilon))=v+o(1)$, cf. \eqref{eq:circav}. Since the probability
of $\{\tau_{\mathcal G}\ge \epsilon^{-2}\}$ tends to $1$, this implies
\eqref{eq:4.7} for $p=p'$.
As for $p\ne p'$,
on the event $\{\tau_{\mathcal G}\ge \epsilon^{-2}\}$ the particles $p$
and $p'$ cannot jump simultaneously since all particles are well
spaced all the time so that no particle can push any other. Then, on this event,
\begin{gather}
  \label{eq:32}
  \epsilon\left.\partial_s\mathbb
  E[x_p((t+s)/\epsilon)x_{p'}((t+s)/\epsilon)|\{\eta_{r,t}\}_{r}]\right|_{s=0^+}\\=r_{p'}(\sigma(t/\epsilon))x_p(t/\epsilon)+
r_p(\sigma(t/\epsilon))x_{p'}(t/\epsilon)
\end{gather}
so that
$\frac d{dt}V_{p,{p'}}(t)=0$ for every $t\le T$ and
\eqref{eq:4.7} follows.

\section{Space-time correlations of the SDEs}

\subsection{Proof of Theorems \ref{th:2}}

In Fourier space, Eqs.  \eqref{eq:4} give
\begin{gather}
\label{eq:dxik}
\frac d{dt}\mathbb E_{\bar \xi}(\hat \xi_{k,t})=\hat A(k) \mathbb E_{\bar \xi}(\hat \xi_{k,t}),\\
  \label{eq:xixi}
  \frac d{dt} \mathbb  E_{\bar \xi}(\hat \xi_{k,t} \hat \xi_{k',t})=
v\delta_{k+k'=0}+\mathbb  E_{\bar \xi}(\hat \xi_{k,t} \hat \xi_{k',t})
(\hat A(k')+\hat A(k))\\
\label{eq:xixits}
\frac d{dt} \mathbb  E_{\bar \xi}(\hat \xi_{k,t} \hat \xi_{k',s})=\hat
A(k) \mathbb E_{\bar \xi}(\hat \xi_{k,t} \hat \xi_{k',s}), \quad t>s
\end{gather}
whose solution is (just differentiate w.r.t. $t$ to check)
\begin{gather}
  \label{eq:47}
  \mathbb E_{\bar \xi}(\hat \xi_{k,t})=e^{\hat A(k)t}\hat\xi_{k,0}\\
\mathbb E_{\bar\xi}(\hat\xi_{k,t}\hat \xi_{k',t})=v
\delta_{k=-k'}\frac{e^{\hat R(k)t}-1}{\hat
  R(k)}+\mathbb E_{\bar \xi}(\hat \xi_{k,t}) \mathbb E_{\bar \xi}(\hat
\xi_{k',t}),
\\
\label{aterza}
\mathbb E_{\bar\xi}(\hat\xi_{k,t}\hat \xi_{k',s})=v
\delta_{k=-k'}e^{\hat A(k)(t-s)}\frac{e^{\hat R(k)s}-1}{\hat
  R(k)}+\mathbb E_{\bar \xi}(\hat \xi_{k,t}) \mathbb E_{\bar \xi}(\hat
\xi_{k',s}), \quad t\ge s.
\end{gather}
Note that it immediately follows that $\mathbb  E_{\bar \xi}(\hat \xi_{k,t} \hat \xi_{k',s})-\mathbb E_{\bar \xi}(\hat \xi_{k,t}) \mathbb E_{\bar \xi}(\hat
\xi_{k',s})$ (and therefore $\mathcal W^{\bar \xi}_{p,p'}(t,s)$) is independent of the initial condition $\bar\xi$.

Using the first of \eqref{eq:eta-eta} together with \eqref{aterza} we find, for $t\ge s$,
\begin{gather}
  \label{eq:24}
   \mathcal W^{\bar \xi}_{p,p+y}(t,s)=\frac v{m^2}\sum_{k\in\mathcal K_m} e^{-i k
     y}e^{\hat A(k)(t-s)}\frac{e^{\hat R(k)s}-1}{\hat
  R(k)}\\
\label{aseconda}
\stackrel{m\to\infty}\to \frac v{(2\pi)^2}
\int_{[-\pi,\pi]^2}dk
\frac{e^{ \hat R(k)s}-1}{\hat R(k)}
e^{(t-s)\hat A(k)}e^{-i k y}.
\end{gather}
Observe that, as $m\to\infty$, the set $\mathcal K_m$ fills the
parallelogram
\[\mathcal K_\infty:=\{(k_1,k_2)\in \mathbb R^2:| k_1|\le\pi,|k_2-(C/D)k_1|\le \pi\}.\]
However, since the integrand of \eqref{aseconda} is $2\pi$-periodic,
integrating over $\mathcal K_\infty$ or on $[-\pi,\pi]^2$ gives the
same result.

Recall property (3) in Proposition \ref{prop:prop}.
The singularity at $(0,0)$ is integrable and
the dominant contribution to the integral
comes from $k\sim 0$.
From \eqref{aseconda} one deduces \eqref{eq:44}, see
Appendix \ref{app:326} for details.

\subsection{Proof of Theorem \ref{th:muinv}}

The gradients $\xi_{p,t}-\xi_{p',t}$ can be written via \eqref{eq:eta-eta}
as linear combinations of the Fourier components $\{\hat \xi_{k,t}\}_{ k\in \mathcal K_m, k\ne 0}$. Since the random variables $\hat \xi_{k,t}$ solve a set of linear SDEs, their invariant measure $\mu$ (if it exists) is Gaussian. Stationarity of $\mu$ implies, via \eqref{eq:dxik} and \eqref{eq:xixi}, that for every $k,k' \in\mathcal K_m, k,k'\ne0$,
\begin{eqnarray}
  \label{eq:inv}
\mathbb E_\mu(\hat \xi_{k})&=&0\\
\mathbb E_\mu(  \hat \xi_{k} \hat \xi_{k'})&=&-\delta_{k=-k'}  \frac v{\hat A(k)+\hat A(-k)}.
\end{eqnarray}
where in the first equality we used that the fact that $\hat A(k)\ne0$ whenever $k\ne0$.
Since a Gaussian measure is uniquely characterized by mean and variance, uniqueness of the invariant measure also follows.
Using \eqref{eq:eta-eta} we see that \eqref{eq:nomedia} and \eqref{eq:moreg} hold.
Again, the dominant contribution comes from $k\sim0$ and one obtains
the asymptotics \eqref{4punti}, see Appendix
\ref{app:322}.

Next we prove the claim about the limit behavior of the random
variable $\xi_\phi$. Since $\xi_\phi$  is Gaussian and centered for any $\delta,m$, it suffices to prove that its variance converges to \eqref{eq:varianza}.
Note that the support $S_\phi$ of the function $\phi(\delta\cdot)$ has a diameter of order $\delta^{-1}\ll m$. Let $p_0$ be a point outside $S_\phi$, at distance of order $1/\delta$ from the origin. Write
\begin{eqnarray}
  \mathbb E_\mu(\xi_\phi^2)=\delta^4\sum_{p,p'}\phi(\delta p)\phi(\delta p')
\mathbb E_\mu((\xi_p-\xi_0)(\xi_{p'}-\xi_{p_0}))+o(1)
\end{eqnarray}
where we used
\begin{eqnarray}
  \label{eq:sumd}
\delta^2\sum_{p'}\phi(\delta {p'})=O(\delta)
\end{eqnarray}
(because the integral of $\phi$ is zero and the function is smooth) together with
${\rm Cov}_{\mu_\infty}((\xi_p-\xi_0);(\xi_0-\xi_{p_0}))\le  C\log (\delta^{-1})$
for $p$ in the support of  $\phi(\delta\cdot)$, which follows from
\eqref{eq:39} and Cauchy-Schwarz.
Using \eqref{4punti} we have
\begin{gather}
  \mathbb E_\mu(\xi_\phi^2)=\frac{v}{2\pi w}\delta^4\sum_{p,{p'}}\phi(\delta p)\phi(\delta
  {p'})\log\frac{|V(p-p_0)||V q|}{|V(p-{p'})||V p_0|}
+o(1)
\\
\label{acosa}=
\frac{v}{2\pi w}\delta^4\sum_{p,{p'}}\phi(\delta p)\phi(\delta
  {p'})\log\frac{|V\delta(p-p_0)||V\delta {p'}|}{|V\delta(p-{p'})||V \delta p_0|}
+o(1)
\end{gather}
where in the first step we used
\begin{gather}
  \|\phi\|_\infty^2\delta^4\sum_{p,{p'}\in S_\phi}\frac1{\min(1,|p-p_0|,|{p'}|,|p-{p'}|,|p_0|)}\stackrel{\delta\to0}=o(1)
\end{gather}
The sum in \eqref{acosa} can be written as
\begin{gather}
\label{asera}
- \frac{v}{2\pi w}\delta^4\sum_{p,{p'}}\phi(\delta p)\phi(\delta {p'})\log|V\delta(p-{p'})|
+o(1).
\end{gather}
Indeed, the terms proportional to $\log |V(p-p_0)|, \log|V {p'}| $ and $\log |V p_0|$ are independent of at least one of the two summation variables $p,{p'}$: then, using once more
 \eqref{eq:sumd} together with e.g. $|\log |V \delta {p'}||=O(\log(1/\delta))$, \eqref{asera} follows.

The sum in \eqref{asera} is the Riemann approximation of the convergent integral \eqref{eq:varianza}.

 \appendix

\section{Proof of Lemma \ref{lemma:qq}}
\label{app:qq}

Set $\delta=1/10$.
To get the asymptotics of $(q;q)_a$, write
\begin{gather}
\label{eq:a1}
  \log (q;q)_a=\sum_{i=1}^a\log (1-q^i)=-\sum_{n=1}^\infty\frac1n\sum_{i=1}^a q^{ni}=
-\sum_{n=1}^\infty\frac1n \frac{q^n}{1-q^n}(1-q^{na}).
\end{gather}
Then we recall that $a=\epsilon^{-1}b+X$ and that $q=e^{-\epsilon}$ and we write
\begin{gather}
  \log(q;q)_a=R_1+R_2:=-\sum_{n=1}^\infty\frac1n\frac{q^n}{1-q^n}(1-e^{-b n})+
\sum_{n=1}^\infty\frac1n\frac{q^n}{1-q^n}e^{-b n}(e^{-nX\epsilon}-1).
\end{gather}
We have, using $q^n/(1-q^n)\sim (1-\epsilon n/2)/(\epsilon n)$ for $\epsilon n\ll1$,
\begin{gather}
\label{eq:R1}
  R_1=C(\epsilon)+\sum_{n=1}^\infty\frac1n\frac{q^n}{1-q^n}e^{-b n}=
C(\epsilon)+\frac1\epsilon \sum_{n=1}^\infty\frac1{n^2}e^{-bn}-\frac12\sum_{n=1}^\infty
\frac{e^{-bn}}n+o(1)
\end{gather}
with $C(\epsilon)=-\sum_n (1/n)q^n/(1-q^n)$ independent of $b,X$.
As for $R_2$, we claim that
\begin{gather}
\label{eq:R2}
  R_2=-X\sum_{n=1}^\infty\frac{e^{-b n}}n+\epsilon \frac{X^2}2\sum_{n=1}^\infty e^{-bn}+o(1)
\end{gather}
which, together with \eqref{eq:R1}, concludes the proof of the Lemma.

To get \eqref{eq:R2}, remark first of all that
\begin{gather}
  \sum_{n=\epsilon^{-1/2+\delta}}^\infty\frac1n \frac{e^{-\epsilon n}}{1-e^{-\epsilon n}}e^{-bn}(e^{-n X \epsilon}-1)=o(1)
\end{gather}
because $b>0$ and $X\epsilon=o(1)$. Next, for $n\le \epsilon^{-1/2+\delta}$ one has
$|X n \epsilon|=o(1)$. Then,
\begin{gather}
 \sum_{n=1}^{\epsilon^{-1/2+\delta}}\frac1n \frac{e^{-\epsilon n}}{1-e^{-\epsilon n}}e^{-bn}(e^{-n X \epsilon}-1)  \\\sim
 \sum_{n=1}^{\epsilon^{-1/2+\delta}} \frac{1-\epsilon n/2}{\epsilon n^2}e^{-bn}(-X n \epsilon+\frac{(X n \epsilon)^2}2+O(|X n \epsilon|^3)).
\end{gather}
The terms proportional to $X$ and $X^2$  give the r.h.s. of \eqref{eq:R2}.
As for the rest, it is $o(1)$: just recall that $\delta=1/10$ and observe that
\begin{gather}
|X|^3\epsilon^2\ll \epsilon^{2-3/2-3\delta}=o(1).
\end{gather}
\section{Negativity of $\hat R(k)$}

\label{app:Rk}

Recall (cf. \eqref{eq:QAR}) that $\hat R(k)=(1/v)\hat Q(k)$.
We claim that the only stationary points of $\hat Q(k)$ are
$k^{(1)}=(0,0),k^{(2)}=(0,\pi),k^{(3)}=(\pi,0),k^{(4)}=(\pi,\pi)$
(modulo $2\pi$). It is trivial to check that
$\hat Q(k^{(i)})<0$ for $i=2,3,4$ (use that $B,C<D$ and that $x\mapsto f(x):=e^{-x}/(1-e^{-x})$ is decreasing on $\mathbb R^+$) while of course $\hat Q(0,0)=0$. Recall also (cf. \eqref{eq:38}) that the Hessian of $\hat R$ at
$k=0$ is not zero, which implies that $\hat R(k)$ is strictly negative outside $k=0$.

Letting $X=\sin(k_1),Y=\sin(k_2)$, the stationary points of $\hat Q$ must satisfy
\begin{gather}
  \label{eq:stazionari}
 f(D)X=f(C)Y\\
f(D)X=\pm f(B)(X\sqrt{1-Y^2}\pm Y\sqrt{1-X^2}).
\end{gather}
If we exclude the solution $X=Y=0$ (which corresponds to $k=k^{(i)},i=1,\dots,4$), \eqref{eq:stazionari} implies that
\begin{gather}
  \label{eq:Xx}
  X=\pm \frac{\sqrt{\Delta}}{2f(D-C)f(C)f(D)^2}\\
\Delta=-\frac{(1+e^C)(e^C+e^D)(-3e^C+e^{2C}+e^D+e^{C+D})}{(1-e^C)^2(e^C-e^D)^2(1-e^D)^4}
\end{gather}
where we used $B=D-C$.
However, one sees immediately that $\Delta<0$, since $C<D$, so that
\eqref{eq:stazionari} does not give real solutions.

\section{Integral asymptotics}
\subsection{Proof of \eqref{eq:44}}
\label{app:326}

Let $\chi:[-\pi,\pi]^2\to[0,1]$ be a $C^\infty$ cutoff function such that
$\chi(k)=1$ for $|k|<1/2$ and $\chi(k)=0$ for $|k|\ge1$.
We first write the r.h.s. of \eqref{aseconda}
as
\begin{gather}
\label{eq:prima}
\frac v{(2\pi)^2}\int_{\mathbb R^2}dk \chi(k)\frac{e^{ \hat R(k)s}-1}{\hat R(k)}
e^{(t-s)\hat A(k)}e^{-i k y}+j_1(t,s,y)
\end{gather}
where $j_1$, as well as the error terms $j_2,j_3,\ldots$ below, satisfies
\begin{eqnarray}
\label{j1}
j_1=O(1)\quad\text{ and }\quad |j_1|\to0 \quad \text{ if } \max(|y|, t-s)\to\infty.
\end{eqnarray}
To see this,
remark first of all that
\begin{gather}
\label{vm}
 \hat A(k)=\frac 12 \hat R(k)+\frac12(\hat A(k)-\hat A(-k))
\end{gather}
and
\begin{eqnarray}
  \label{eq:49}
 \hat A(k)-\hat A(-k)\in i\mathbb R, \quad \hat R(k)<0, k\ne 0.
\end{eqnarray}
Actually,
\begin{eqnarray}
  \label{eq:51}
  \frac{\hat A(k)-\hat A(-k)}2\stackrel{k\to0}= -i (k, U)+O(|k|^3).
\end{eqnarray}
The function $(1-\chi(k))/\hat R(k)$ is a $C^\infty$ function on $[-\pi,\pi]^2$, so that
\[
 \int_{[-\pi,\pi]^2}dk \frac{e^{(t-s)\hat A(k)}}{\hat
    R(k)}e^{-i k y}(1-\chi(k))
\]
is $o(1)$ when either $t-s\to\infty $ (using $\hat R(k)<0$) or when
$t-s=O(1)$ and $|y|\to \infty$ (the Fourier coefficients of a
$C^\infty$ function on the torus decay faster than any inverse
power). Also,
\begin{eqnarray}
  \label{eq:48}
  \int_{[-\pi,\pi]^2}dk (1-\chi(k))\frac{e^{ \hat R(k)s}}{\hat R(k)}
e^{(t-s)\hat A(k)}e^{-i k y}
\end{eqnarray}
is $o(1)$ when either $s$ or $t-s$ diverge; when $s,t-s=O(1)$ the
integral in \eqref{eq:48} is again $o(1)$ when $|y|\to\infty$ by
decay of Fourier coefficients. Eq. \eqref{eq:prima} follows.

Next, the integral in  \eqref{eq:prima}
gives
\begin{gather}
  \label{j2}
\frac v{(2\pi)^2}\int_{\mathbb R^2}dk \chi(k) \frac{e^{ \hat R(k)s}-1}{\hat W(k)}
e^{(t-s)\hat A(k)}e^{-i k y}+j_2(t,s,y).
\end{gather}
For this, just note that, since $\hat R(k)$ is smooth and even in $k$,
\[
\frac{\hat W(k)-\hat R(k)}{\hat W(k)\hat R(k)}=O(1)
\]
and
\[
\nabla_k \frac{\hat W(k)-\hat R(k)}{\hat W(k)\hat R(k)}=O(1/|k|).
\]
This immediately implies
\begin{gather}
\label{eq:vp}
   \int_{\mathbb R^2}dk \chi(k)e^{\hat R(k)s+\hat A(k)(t-s)}\left(\frac1{\hat
    W(k)}-\frac 1{\hat R(k)}\right)e^{-i k y}=o(1)
\end{gather}
if $\max(t,t-s)\to\infty$ (use dominated convergence, together with
\[
e^{\hat R(k)s+\hat A(k)(t-s)}\to0 \quad \text{for all}\quad k\ne
0\quad \text{if}\quad \max(t,t-s)\to\infty).\] When $\max(t,t-s)=O(1)$ then the l.h.s. of
\eqref{eq:vp} is
$(1/|y|)$
(just do an integration by parts). One bounds
\begin{eqnarray}
  \label{eq:50}
  \int_{\mathbb R^2}dk \chi(k)e^{\hat A(k)(t-s)}\left(\frac1{\hat
    W(k)}-\frac 1{\hat R(k)}\right)e^{-i k y}
\end{eqnarray}
similarly and \eqref{j2} follows.

As a third step, write the integral in \eqref{j2} as
\begin{gather}
\label{j3}
 \frac v{(2\pi)^2}\int_{\mathbb R^2}dk \chi(k) \frac{e^{ \hat W(k)s}-1}{\hat W(k)}
e^{(t-s)\hat A(k)}e^{-i k y}+j_3(t,s,y).
\end{gather}
In fact, assume first that $s\to\infty$.
Then, the contribution to the integral
\begin{gather}
    \int_{\mathbb R^2}dk \chi(k)\frac{e^{\hat R(k)s}-e^{\hat W(k)s}}{\hat
    W(k)}e^{(t-s)\hat A(k)}e^{-i k y}
\end{gather}
from the region $|k|\ge s^{-1/2+\epsilon},\epsilon>0$ is negligible because we have $\hat R(k),\hat W(k)\le -C |k|^2$. The contribution from $|k|\le s^{-1/2+\epsilon}$ is also negligible, this time because
\[
\frac{e^{\hat W(k)s}-e^{\hat R(k)s}}{\hat W(k)}=e^{\hat
  W(k)s}\frac{1-e^{(\hat R(k)-\hat W(k))s}}{\hat W(k)}=O(s |k|)=O(s^{1/2+\epsilon})
\]
(use that $s(\hat R(k)-\hat W(k))=O(s |k|^3)=o(1)$ and $\exp(\hat
W(k)s)\le 1$)
so that the contribution to the integral is
$O(s^{-1/2+3\epsilon})=o(1)$ if $\epsilon$ is small enough.
The proof of \eqref{j3} in the case $s=O(1)$ is simpler and follows
that of \eqref{eq:prima} or \eqref{j2}.

With similar considerations, one rewrites \eqref{j3} as
\begin{gather}
  \label{j4}
 \frac v{(2\pi)^2}\int_{\mathbb R^2}dk \chi(k)\frac{e^{\hat W(k)s}-1}{\hat
    W(k)}e^{(t-s)\hat W(k)/2}e^{-i k y-i(k,U)(t-s)}+j_4(t,s,y)\\=
\frac v{(2\pi)^2} \int_{\mathbb R^2}dk \chi(k)\frac{e^{\hat W(k)s}-1}{\hat
    W(k)}e^{\hat W(k)+(t-s)\hat W(k)/2}e^{-i k y-i(k,U)(t-s)}+j_5(t,s,y)\\=
\frac v{(2\pi)^2} \int_{\mathbb R^2}dk \frac{e^{\hat W(k)s}-1}{\hat
    W(k)}e^{\hat W(k)+(t-s)\hat W(k)/2}e^{-i k y-i(k,U)(t-s)}+j_6(t,s,y).
\end{gather}
In the first step, we used \eqref{vm}--\eqref{eq:51} to replace $\hat
A(k)$ with its second-order Taylor expansion $ \hat W(k)/2-i(k,U)$. In the second, we used that $(\exp(\hat
W(k))-1)/\hat W(k)$ is smooth at $k\sim 0$ in order to multiply by
$\exp(\hat W(k))$. In the third we remove the cutoff function: the
integral with $\chi(\cdot)$ replaced by $1-\chi(\cdot)$ is $o(1)$ either because
$|y|\to\infty$ (integrate by parts w.r.t. $k$) or because
$(t-s)\to\infty$, so that $\exp((t-s)\hat W(k))\to0$ (apply dominated convergence).

\begin{Remark}
The reason why in the second step in \eqref{j4} we multiplied by
$\exp(\hat W(k))$ is
that otherwise we could not remove the
cutoff function $\chi(\cdot)$ since for $t=s$ the integrand would decay
only as $O(|k|^{-2})$ at infinity and the integral would not converge.
  \label{rem:ausi}
\end{Remark}

The integral can now be computed explicitly.
Recall that $\hat W(k)=(k,\hat W k)$ with $\hat W$ a strictly
negative definite symmetric matrix.
From definition \eqref{eq:Vv} we have
\begin{eqnarray}
  \label{eq:18}
 | \det(V)|=\frac1{w}.
\end{eqnarray}
Changing variables as $k=V^T w$ and putting $H=Vy+(t-s)V U$, the integral in the r.h.s. of \eqref{j4} becomes
\begin{gather}
\label{eq:porta}
 -|\det(V)|\frac v{(2\pi)^2}\int_{\mathbb R^2}e^{-i (w, H)}\frac{e^{-s|w|^2}-1}{|w|^2}e^{-|w|^2(1+(t-s)/2)}dw\\
= -|\det(V)|\frac v{(2\pi)^2}\int_1^\infty da\int_{\mathbb R^2}e^{-i (w,H)}(e^{-s|w|^2}-1)e^{-(a+(t-s)/2)|w|^2}dw\\
=- \pi|\det(V)|\frac v{(2\pi)^2}\int_1^\infty\left[\frac{e^{-|H|^2/(4(a+(t+s)/2))}}{a+(t+s)/2}-\frac{e^{-|H|^2/(4(a+(t-s)/2))}}{a+(t-s)/2}\right]da\\
= |\det(V)|\frac
v{4\pi}\int_{1+(t-s)/2}^{1+(t+s)/2}\frac{e^{-|H|^2/(4a)}}a da
\end{gather}
Together with \eqref{eq:18}, Eq. \eqref{eq:44} then follows.

\subsection{Proof of \eqref{4punti}}
\label{app:322}
Similar (actually simpler) arguments as those leading to \eqref{j4}
show that the integral in \eqref{eq:moreg} equals
\begin{gather}
  \label{eq:39bis}
-\frac v{(2\pi)^2}\int_{\mathbb R^2}d k\frac{(e^{i
  k y_1}-e^{i k y_2})(e^{-i k y_3}-e^{-i k y_4})}{\hat W(k)}e^{\hat
  W(k)}+R_{y_1,y_2,y_3,y_4}
\end{gather}
with $R$ satisfying \eqref{eq:R4}.
The factor $\exp(\hat W(k))$ appears for the same reasons as in
Remark \ref{rem:ausi}.
With the same change of coordinates $k=V^T w$ as before, the integral
in
\eqref{eq:39bis} becomes
\begin{eqnarray}
  \label{eq:41}
\frac v{(2\pi)^2w} \int_{\mathbb
  R^2}\frac{(e^{i (w,Y_1) }-e^{i (w,Y_2)})(e^{-i (w,Y_3)}-e^{-i (w,Y_4)})}{|w|^2}e^{-|w|^2}d w
\end{eqnarray}
with $Y_i=V y_i$.
This equals
\begin{gather}
  \label{eq:42}
\frac v{4\pi w} 
\int_1^\infty
da\frac{e^{-|Y_1-Y_3|^2/(4a)}-e^{-|Y_1-Y_4|^2/(4a)}+e^{-|Y_2-Y_4|^2/(4a)} -e^{-|Y_2-Y_3|^2/(4a)}}a\\
=\frac v{(2\pi)^2w} \left[
-\Gamma\left(0,\frac{|Y_1-Y_3|^2}4\right)+\Gamma\left(0,\frac{|Y_1-Y_4|^2}4\right) \right.\\\left.-\Gamma\left(0,\frac{|Y_2-Y_4|^2}4\right) +\Gamma\left(0,\frac{|Y_2-Y_3|^2}4\right)+2\log\frac{|Y_1-Y_4||Y_2-Y_3|}{|Y_1-Y_3||Y_2-Y_4|}
\right],
\end{gather}
where $\Gamma(0,x):=\int_x^\infty
e^{-t}/t \;dt$ is the incomplete Gamma function.
Using  the exponential decay of $\Gamma(0,x)$ as $x\to\infty$, we get
immediately \eqref{4punti} (the ``$+1$''s in
\eqref{4punti} takes care of the case where $Y_1=Y_3$ and/or
$Y_2=Y_4$: in fact, $-\Gamma(0,x^2/4)+\log (1/x^2)$ has a finite limit
as $x\to 0$).

\end{document}